\documentclass[11pt]{article}
\usepackage[margin=1in]{geometry}
\usepackage{amsmath,amssymb,amsthm,mathtools}
\usepackage{booktabs,tabularx,array,microtype}
\usepackage{algorithm,algpseudocode}
\usepackage[numbers,sort&compress]{natbib}
\usepackage{graphicx}
\usepackage{hyperref}
\hypersetup{
 colorlinks=true,linkcolor=blue,citecolor=blue,urlcolor=blue,
 pdftitle={Near-Optimal Complexity of Finite-Sum Nonconvex-Strongly-Concave Minimax Optimization},
 pdfauthor={Qihao Zhou}
}
\newtheorem{theorem}{Theorem}
\newtheorem{lemma}{Lemma}
\newtheorem{proposition}{Proposition}

\theoremstyle{definition}
\newtheorem{assumption}{Assumption}
\newtheorem{definition}{Definition}
\theoremstyle{remark}
\newtheorem{remark}{Remark}
\newcommand{\R}{\mathbb R}
\newcommand{\E}{\mathbb E}
\newcommand{\Prob}{\mathbb P}
\newcommand{\norm}[1]{\lVert#1\rVert}
\newcommand{\eps}{\varepsilon}
\newcommand{\tO}{\widetilde{O}}
\newcommand{\tTheta}{\widetilde{\Theta}}
\newcommand{\supp}{\operatorname{supp}}
\newcommand{\Lip}{\operatorname{Lip}}
\newcommand{\Fas}{\mathcal F_{\mathrm{AS}}^0}
\newcommand{\Fis}{\mathcal F_{\mathrm{IS}}^0}
\newcommand{\Bis}{\mathcal B_{\mathrm{IS}}^0}
\newcommand{\N}{\mathfrak N}
\newcommand{\Nexp}{\mathfrak N^{\mathrm{exp}}}
\newcolumntype{Y}{>{\raggedright\arraybackslash}X}
\title{Near-Optimal Complexity of Finite-Sum\\Nonconvex-Strongly-Concave Minimax Optimization}
\author{Qihao Zhou\thanks{Independent Researcher. Email: \href{mailto:qihao_zhou@berkeley.edu}{\texttt{qihao\_zhou@berkeley.edu}}.}}
\date{}

\begin{document}
\maketitle
\begin{abstract}
We characterize, up to logarithmic factors, the minimax expected query complexity of finite-sum nonconvex--strongly-concave optimization under mean-squared averaged smoothness. For randomized zero-respecting incremental first-order algorithms, the complexity is \mbox{$\tTheta(n+\min\{\sqrt n\,\kappa,n^{3/4}\sqrt\kappa\}\,L\Delta\eps^{-2})$} throughout \mbox{$\kappa=L/\mu\ge1$}, under exact dual initialization and for \mbox{$0<\eps^2\le c_0L\Delta$}, where $c_0>0$ is universal. This characterization is our main result. A dual-chain construction provides the lower bound and matches the existing Catalyst upper bound for \mbox{$\kappa\ge\sqrt n$}. For \mbox{$1\le\kappa\le\sqrt n$}, we introduce recursive proximal descent--ascent (RPDA), which retains a recursive gradient estimator across proximal subproblems and attains \mbox{$\tO(n+\sqrt n\,\kappa L\Delta\eps^{-2})$} with a fixed query budget. Both upper bounds guarantee an expected squared primal gradient at most \mbox{$\eps^2$}. Under individual smoothness, we also prove \mbox{$\Omega(n+\min\{\kappa,\sqrt{n\kappa}\}\,L\Delta\eps^{-2})$} for every \mbox{$\kappa\ge1$}. It matches the polynomial upper rate for a bilinear subclass when \mbox{$\kappa\ge n$}; the intermediate-condition-number gap remains open.
\end{abstract}

\section{Introduction}\label{sec:intro}
We consider the finite-sum minimax problem
\begin{equation}\label{eq:problem}
 \min_{x\in\R^{d_x}}\max_{y\in\R^{d_y}}
 f(x,y):=\frac1n\sum_{i=1}^{n}f_i(x,y),
 \qquad \Phi(x):=\max_{y\in\R^{d_y}}f(x,y),
\end{equation}
where $f$ can be nonconvex in $x$ and is $\mu$-strongly concave in $y$. Strong concavity makes the maximizer unique and the primal function $\Phi$ differentiable. We seek a point $\hat x$ satisfying $\E\norm{\nabla\Phi(\hat x)}^2\le\eps^2$, using an incremental first-order oracle (IFO) that returns the gradient of one selected component at each query.

The finite-sum structure permits variance reduction, but the cost of estimating the primal gradient also depends on the inner maximization problem. Under averaged smoothness, \citet{zhang} established a lower bound of $\Omega(n+\sqrt{n\kappa}\,L\Delta\eps^{-2})$ for a linear-span class in the large-condition-number regime. Their Catalyst framework gives the upper bound $\tO((n+n^{3/4}\sqrt\kappa)\,L\Delta\eps^{-2})$ when the initial dual error is zero. Here $\kappa=L/\mu$ and $\Phi(0)-\inf\Phi\le\Delta$. A lower bound of the same order also holds for the proximal incremental oracle class studied by \citet{han}. These results leave a factor of $n^{1/4}$ between the polynomial rates when $\kappa\ge2n$, a gap also recorded as open in Case~2-4 of the ICLR 2025 blog post by \citet{zhanghu2025}.

Our main contribution is a matching lower--upper characterization throughout $\kappa\ge1$: the minimax expected IFO complexity is $\tTheta(n+\min\{\sqrt n\,\kappa,n^{3/4}\sqrt\kappa\}\,L\Delta\eps^{-2})$ for $0<\eps^2\le c_0L\Delta$ (Theorem~\ref{thm:matching}). The characterization concerns randomized zero-respecting algorithms, permits adaptive component selection, and assumes exact dual initialization. The lower bound has no logarithmic loss; the upper bound agrees in every polynomial factor.

The two regimes require different contributions. For $\kappa\ge\sqrt n$, we strengthen the lower bound to match Catalyst. The construction connects consecutive primal coordinates through dual paths, following \citet{li}, and assigns their internal edges to random components. Searching for an edge costs order $n$ queries; the admissible path length is order $\sqrt{\kappa/\sqrt n}$. These costs multiply across the primal chain.

For $1\le\kappa\le\sqrt n$, we introduce recursive proximal descent--ascent (RPDA; Algorithm~\ref{alg:recursiveprox}). It combines proximal regularization with the recursive estimator of \citet{spider}. The estimator is retained across successive subproblems, and a total-movement estimate controls its accumulated error. The resulting $\tO(n+\sqrt n\,\kappa L\Delta\eps^{-2})$ upper bound matches an amplitude-adjusted version of the lower-bound construction. Algorithm~\ref{alg:recursiveprox} has a fixed query budget and includes a final refinement that guarantees stationarity of $\Phi$ itself. The proof is given in Appendix~\ref{app:smallupper}.

Under individual smoothness, we obtain $\Omega(n+\min\{\kappa,\sqrt{n\kappa}\}\,L\Delta\eps^{-2})$ on a bilinear subclass for every $\kappa\ge1$. Combining it with the minimization lower bound of \citet{densehiding} gives a comparison throughout $\kappa\ge1$. RPDA improves the upper bound below $\sqrt n$, while a proximal reduction of existing separable saddle-point methods matches the lower bound for $\kappa\ge n$. The remaining bilinear gap, and the complexity of general nonlinear coupling under individual smoothness, remain open.

Section~\ref{sec:prelim} specifies the assumptions and oracle model. Section~\ref{sec:related} reviews prior work and compares the rates. Section~\ref{sec:results} states the results. Sections~\ref{sec:construction} and~\ref{sec:smallupper} outline the lower-bound construction and the matching upper bound. Section~\ref{sec:experiments} illustrates the two regimes numerically. Detailed proofs are in the appendices.

\section{Preliminaries}\label{sec:prelim}
We use $\norm{\cdot}$ for the Euclidean norm of a vector and the spectral norm of a matrix. For an integer $q\ge1$, let $[q]=\{1,\ldots,q\}$. We write $z=(x,y)$ for the joint variable, $e_k$ for a standard basis vector, and $\supp(v)$ for the set of nonzero coordinates of $v$. The notation $O(\cdot)$, $\Omega(\cdot)$, and $\Theta(\cdot)$ hides universal constants. A tilde additionally suppresses logarithmic factors in problem and accuracy parameters. Thus $\tTheta$ denotes agreement up to logarithms, not an exact constant-factor characterization.

\subsection{Function classes and stationarity}
We consider continuously differentiable components on unconstrained Euclidean domains and distinguish the following two smoothness conditions.
\begin{assumption}[Averaged smoothness, AS]\label{ass:as}
For all $z,z'$,
\begin{equation}\label{eq:as}
 \frac1n\sum_{i=1}^{n}\norm{\nabla f_i(z)-\nabla f_i(z')}^2
 \le L^2\norm{z-z'}^2.
\end{equation}
\end{assumption}
\begin{assumption}[Individual smoothness, IS]\label{ass:is}
For all $i\in[n]$ and all $z,z'$, $\norm{\nabla f_i(z)-\nabla f_i(z')}\le L\norm{z-z'}$.
\end{assumption}
Individual smoothness implies averaged smoothness. The converse need not hold: \eqref{eq:as} allows an individual component to have smoothness of order $\sqrt nL$.

\begin{assumption}[Strong concavity and initialization]\label{ass:init}
For every $x$, the function $f(x,\cdot)$ is $\mu$-strongly concave. The primal function is bounded below and satisfies
\begin{equation}\label{eq:init}
 \Phi(0)-\inf_x\Phi(x)\le\Delta,
 \qquad y^*(0):=\arg\max_y f(0,y)=0.
\end{equation}
\end{assumption}
The exact dual initialization in \eqref{eq:init} makes the lower--upper comparison explicit. It restricts the hard class, so a lower bound proved here also applies when only the primal gap is bounded. It does not justify dropping an initialization term from an upper bound on a larger class.

Strong concavity on $\R^{d_y}$ gives a unique maximizer $y^*(x)$, and $\nabla\Phi(x)=\nabla_x f(x,y^*(x))$. We measure accuracy by
\begin{equation}\label{eq:criterion}
 \E\norm{\nabla\Phi(\hat x)}^2\le\eps^2.
\end{equation}
Let $\Fas(n,L,\mu,\Delta)$ and $\Fis(n,L,\mu,\Delta)$ denote the classes satisfying Assumption~\ref{ass:init} and, respectively, Assumption~\ref{ass:as} or~\ref{ass:is}, over all finite dimensions.

For the individual-smoothness result, we also use the subclass $\Bis\subseteq\Fis$ of functions
\begin{equation}\label{eq:bilinearclass}
 f_i(x,y)=g_i(x)+\langle A_ix,y\rangle-h_i(y),
\end{equation}
where $g_i$ and $h_i$ are $L$-smooth, $h_i$ is $\mu$-strongly convex, and $\norm{A_i}\le L$. The joint individual-smoothness condition is retained. Our dual-path instances have a common separable $g_i$ and quadratic $h_i$.

\subsection{Oracle and algorithm class}
\begin{definition}[Incremental first-order oracle]
Given an index $i\in[n]$ and a point $(x,y)$, an IFO query returns $\nabla f_i(x,y)$. Each component-gradient evaluation counts as one query; computing a full gradient costs $n$ queries.
\end{definition}

\begin{definition}[Zero-respecting algorithm]\label{def:zr}
Write $(g_x^t,g_y^t)=\nabla f_{i_t}(x^t,y^t)$ for the response to query $t$. The first query is at $(0,0)$. For every $t\ge1$, a zero-respecting algorithm satisfies
\begin{equation}\label{eq:zr}
 \supp(x^t)\subseteq\bigcup_{s<t}\supp(g_x^s),\quad
 \supp(y^t)\subseteq\bigcup_{s<t}\supp(g_y^s),\quad
 \supp(\hat x^t)\subseteq\bigcup_{s\le t}\supp(g_x^s),
\end{equation}
where $\hat x^t$ is an output after $t$ queries, and $\hat x^0=0$.
\end{definition}
Indices, query values, and stopping decisions may be arbitrary measurable functions of the transcript and private randomness. There is no bound on the magnitude of exposed coordinates. Ordinary gradient linear-span methods satisfy Definition~\ref{def:zr}; arbitrary changes of basis or unrestricted output vectors need not satisfy it.

For a function class $\mathcal F$, let $\N_\eps(\mathcal F)$ be the smallest fixed query budget of a randomized zero-respecting algorithm that guarantees \eqref{eq:criterion} on every instance in $\mathcal F$. The algorithm may know the class parameters and the dimension. Let $\Nexp_\eps(\mathcal F)$ instead minimize the worst-case expected query count over zero-respecting algorithms with a random stopping time and the same accuracy guarantee. In particular, $\Nexp_\eps\le\N_\eps$.

\section{Related Work and Complexity Comparison}\label{sec:related}
\paragraph{NC--SC lower bounds.}
\citet{zhang,li} establish the deterministic $\sqrt\kappa$ dependence. For finite sums, \citet{zhang} and \citet{han} prove $\Omega(n+\sqrt{n\kappa}\,L\Delta\eps^{-2})$ in their large-condition-number regimes, for linear-span IFO and restricted PIFO classes, respectively. The averaged-smooth comparison is recorded as open in Case~2-4 of \citet{zhanghu2025}. Theorem~\ref{thm:matching} resolves its polynomial gap for randomized zero-respecting IFO algorithms with exact dual initialization; it makes no PIFO claim.

\paragraph{The $n^{3/4}$ dependence and dual paths.}
The factor $n^{3/4}$ already occurs under averaged smoothness for strongly convex finite sums with nonconvex components \cite[Theorem 4.1]{zhougu}, and for unbalanced strongly-convex--strongly-concave problems \cite[Theorem 22]{han}, with matching upper bounds in \citet{luounbalanced}. Our result concerns its NC--SC stationarity counterpart. The dual paths and separable corrections follow \citet[Equations (6)--(10)]{li}. We combine them with the primal chain of \citet{cdhs}, assigning only internal dual edges to random component owners and giving adjacent edges distinct owners. This decomposition yields the product $n\sqrt{\kappa/\sqrt n}$ per primal link.

\paragraph{Variance reduction and proximal methods.}
SREDA has rate $\tO(n+\sqrt n\,\kappa^2L\Delta\eps^{-2})$ when $n\ge\kappa^2$, for expected gradient norm \cite{sreda,zhang}. \citet{yangvr} introduce VR-AGDA under two-sided Polyak--\L{}ojasiewicz (PL) conditions. Under individual smoothness, \citet{chenpl} obtain $O((n+\sqrt n\,\kappa_x\kappa_y^2)\log(1/\eps))$ with SPIDER-GDA, and accelerate the condition-number factor to $\sqrt n\,\kappa_x\kappa_y$ when $\kappa_y\gtrsim\sqrt n$, up to logarithms. Their one-sided PL extension gives stationarity rates $O(n+\sqrt n\,\kappa_y^2L\Delta\eps^{-2})$ and $\tO((n+\sqrt n\,\kappa_y)L\Delta\eps^{-2})$. RPDA instead attains $\tO(n+\sqrt n\,\kappa L\Delta\eps^{-2})$ below $\sqrt n$ under averaged smoothness, with a fixed budget and criterion \eqref{eq:criterion}. It retains the recursive estimator of \citet{spider} across proximal centers and controls its error through total movement. Single-loop alternatives include Smoothed-AGDA, whose deterministic iteration bound is $O(\kappa L\Delta\eps^{-2})$ under one-sided PL \cite{yangsmoothed}.

More recently, \citet[Theorem~2.4]{khanxu2026} analyze SPIDER-based smoothed descent--ascent under extended Kurdyka--\L{}ojasiewicz conditions. At $\theta=1/2$, their finite-sum bound has $O(n+\sqrt n\,\kappa\Delta_{\mathrm{KL}}\eps^{-2})$ dependence after normalizing smoothness, where $\Delta_{\mathrm{KL}}$ is their initial Lyapunov gap: strong concavity permits the KL constant $\mu_{\mathrm{KL}}^2=2\mu$. Their theorem assumes a compact dual domain and controls both game-stationarity residuals (Definition~1.1), so it does not directly give the unconstrained primal-gradient guarantee~\eqref{eq:criterion}. Converting equal residual tolerances to primal-gradient accuracy can introduce a factor $\kappa$ in the tolerance.

\paragraph{Individual smoothness and unrestricted queries.}
The separable saddle-point method of \citet{jst} gives the bilinear upper bound in Table~\ref{tab:comparison-is}; Appendix~\ref{app:upper} supplies the proximal reduction. The minimization lower bound of \citet{densehiding} applies to unrestricted randomized IFO algorithms under individual smoothness. Adding a decoupled quadratic dual variable embeds it in our bilinear class. Appendix~\ref{app:iscomparison} checks the accuracy criterion and expected query convention.

\paragraph{Other oracle models.}
Stochastic lower bounds \cite{stochasticzhou,stochasticzhang} concern fresh oracle samples; nonconvex--concave lower bounds \cite{ncconcave} use different curvature and domain assumptions. Their rates do not directly transfer to the finite-sum classes considered here.

\begin{table}[tb]
\centering
\caption{Component-oracle complexities under averaged smoothness (AS). Upper bounds precede lower bounds. The bold complexities are our matching rates under the accuracy restriction of Theorem~\ref{thm:matching}.}
\label{tab:comparison}
\small
\setlength{\tabcolsep}{4pt}
\renewcommand{\arraystretch}{1.35}
\begin{tabularx}{\textwidth}{@{}>{\raggedright\arraybackslash}p{0.27\textwidth} >{\raggedright\arraybackslash}p{0.20\textwidth} >{\raggedright\arraybackslash$\displaystyle}X<{$}@{}}
\toprule
Result & Oracle/class & \multicolumn{1}{l@{}}{Complexity} \\
\midrule
SREDA \cite{sreda}, $n\ge\kappa^2$
 & IFO & \tO(n+\sqrt n\,\kappa^2 L\Delta\eps^{-2}) \\
Catalyst \cite{zhang}
 & IFO & \tO((n+n^{3/4}\sqrt\kappa)L\Delta\eps^{-2}) \\
\textbf{This paper, Theorem~\ref{thm:matching}}
 & ZR IFO & \boldsymbol{\tO(n+\min\{\sqrt n\,\kappa,n^{3/4}\sqrt\kappa\}L\Delta\eps^{-2})} \\
\midrule[0.3pt]
\citet{zhang}
 & Linear-span IFO & \Omega(n+\sqrt{n\kappa}\,L\Delta\eps^{-2}) \\
\citet{han}, Theorem 25
 & PIFO class & \Omega(n+\sqrt{n\kappa}\,L\Delta\eps^{-2}) \\
\textbf{This paper, Theorem~\ref{thm:as}}
 & Randomized ZR IFO & \boldsymbol{\Omega(n+\min\{\sqrt n\,\kappa,n^{3/4}\sqrt\kappa\}L\Delta\eps^{-2})} \\
\bottomrule
\end{tabularx}
\par\vspace{4pt}
\begin{minipage}{\textwidth}\footnotesize
ZR denotes zero-respecting algorithms. Upper bounds use exact dual initialization. This paper uses RPDA for $\kappa\le\sqrt n$ (fixed budget) and Catalyst \cite{zhang} otherwise (expected queries). Prior lower bounds retain their parameter restrictions; the PIFO class of \citet{han} restricts full-gradient access. The SREDA row uses $n\ge\kappa^2$ and the comparison in \citet[Remark 4.2]{zhang}, with its original domain and expected-norm criterion; our upper bound guarantees \eqref{eq:criterion}.
\end{minipage}
\end{table}

\begin{table}[tb]
\centering
\caption{Component-oracle complexities for the individually smooth bilinear subclass (IS--B), \eqref{eq:bilinearclass}. Upper bounds precede lower bounds. The bold combined rates match up to logarithms for $\kappa\ge n$ and for bounded $\kappa$.}
\label{tab:comparison-is}
\small
\setlength{\tabcolsep}{4pt}
\renewcommand{\arraystretch}{1.35}
\begin{tabularx}{\textwidth}{@{}>{\raggedright\arraybackslash}p{0.33\textwidth} >{\raggedright\arraybackslash}p{0.12\textwidth} >{\raggedright\arraybackslash$\displaystyle}X<{$}@{}}
\toprule
Result & Range & \multicolumn{1}{l@{}}{Complexity} \\
\midrule
Proximal reduction of \citet{jst}
 & all $\kappa$ & \tO((n+\sqrt{n\kappa})L\Delta\eps^{-2}) \\
\textbf{RPDA, Theorem~\ref{thm:smallupper}}
 & $\kappa\le\sqrt n$ & \boldsymbol{\tO(n+\sqrt n\,\kappa L\Delta\eps^{-2})} \\
\textbf{Combined upper bound}
 & all $\kappa$ & \boldsymbol{\tO(n+\min\{\sqrt n\,\kappa,n+\sqrt{n\kappa}\}L\Delta\eps^{-2})} \\
\midrule[0.3pt]
\citet{densehiding}, minimization
 & all $\kappa$ & \Omega(n+\sqrt n\,L\Delta\eps^{-2}) \\
\textbf{This paper, Theorem~\ref{thm:is}}
 & all $\kappa$ & \boldsymbol{\Omega(n+\min\{\kappa,\sqrt{n\kappa}\}L\Delta\eps^{-2})} \\
\textbf{Combined lower bound}
 & all $\kappa$ & \boldsymbol{\Omega(n+\max\{\sqrt n,\min\{\kappa,\sqrt{n\kappa}\}\}L\Delta\eps^{-2})} \\
\bottomrule
\end{tabularx}
\par\vspace{4pt}
\begin{minipage}{\textwidth}\footnotesize
Here ``all $\kappa$'' means $\kappa\ge1$. The comparison uses $n\ge2$, exact dual initialization, criterion \eqref{eq:criterion}, and $0<\eps^2\le c_{\mathrm{IS}}L\Delta$, with a universal $c_{\mathrm{IS}}>0$. Bounds concern randomized ZR IFO algorithms; the minimization lower bound holds for a larger class. Proposition~\ref{prop:iscomparison} proves the combined bounds for expected query counts. RPDA also has a fixed budget. The accuracy restriction absorbs the $1+$ in the proximal reduction.
\end{minipage}
\end{table}

\section{Main Results}\label{sec:results}
Our main result is the following minimax characterization, obtained by matching a lower bound with algorithmic upper bounds throughout the condition-number range.
\begin{theorem}[Near-optimal minimax complexity]\label{thm:matching}
There is a universal constant $c_0>0$ such that, for $n\ge2$, $\kappa=L/\mu\ge1$, and $0<\eps^2\le c_0L\Delta$,
\begin{equation}\label{eq:matchedcomplexity}
 \Nexp_\eps\bigl(\Fas(n,L,\mu,\Delta)\bigr)=
 \begin{cases}
 \tTheta(n+\sqrt n\,\kappa L\Delta\eps^{-2}),&1\le\kappa\le\sqrt n,\\
 \tTheta(n+n^{3/4}\sqrt\kappa\,L\Delta\eps^{-2}),&\kappa\ge\sqrt n.
 \end{cases}
\end{equation}
For $1\le\kappa\le\sqrt n$, the same statement holds for the fixed-budget complexity $\N_\eps$. The lower bound in \eqref{eq:matchedcomplexity} is $\Omega$ without logarithmic factors.
\end{theorem}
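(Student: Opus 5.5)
Theorem~\ref{thm:matching} combines a lower bound (Theorem~\ref{thm:as}) with two upper bounds, one for each regime. The chain of inequalities is $\Nexp_\eps\le\N_\eps$ together with lower bound $\le\Nexp_\eps$. For $\kappa\le\sqrt n$ I will show $\N_\eps=\tO(n+\sqrt n\,\kappa L\Delta\eps^{-2})$ using RPDA (Theorem~\ref{thm:smallupper}). For $\kappa\ge\sqrt n$ I will show $\Nexp_\eps=\tO(n+n^{3/4}\sqrt\kappa L\Delta\eps^{-2})$ using Catalyst from \citet{zhang}. The restriction $\eps^2\le c_0L\Delta$ turns the Catalyst bound $(n+n^{3/4}\sqrt\kappa)L\Delta\eps^{-2}$ into the stated form, because $nL\Delta\eps^{-2}\gtrsim n$ absorbs the additive $n$. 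The identity $\min\{\sqrt n\kappa,n^{3/4}\sqrt\kappa\}=\sqrt n\kappa$ exactly when $\kappa\le\sqrt n$ then reduces the two cases of \eqref{eq:matchedcomplexity} to the single formula of the abstract.

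\textbf{Upper bounds.} For Catalyst, I need to check three things. First, its guarantee is stated, or can be converted, into \eqref{eq:criterion} under averaged smoothness with exact dual initialization. Second, its inner accelerated variance-reduced solver is zero-respecting, since it only forms linear combinations of returned gradients. Third, its randomized inner stopping yields expected rather than fixed query counts, which is why only $\Nexp_\eps$ is claimed in this regime. For RPDA I cite Theorem~\ref{thm:smallupper}, whose fixed budget gives the $\N_\eps$ claim. I also check that RPDA's iterates stay in the span of the queried gradients, so that the algorithm is admissible for the zero-respecting class.

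\textbf{Lower bound.} This is the main obstacle. I would build a Carmon-type primal chain of length $T\asymp L\Delta\eps^{-2}$ after scaling, with consecutive primal coordinates linked through dual paths of length $m$, in the style of \citet{li}. Each internal dual edge is owned by a uniformly random component, and adjacent edges get distinct owners. A zero-respecting algorithm then progresses along one edge only after querying its owner, which costs $\Omega(n)$ expected queries per edge. That gives $\Omega(nm)$ per primal link, and the exposure count is controlled by a martingale or Chernoff argument.

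Concentrating each edge on one component multiplies that component's smoothness by $\sqrt n$, and averaged smoothness is exactly what permits this. The admissible path length is then $m\asymp\sqrt{\kappa/\sqrt n}$, giving a per-link cost $n^{3/4}\sqrt\kappa$ when $\kappa\ge\sqrt n$. For $\kappa\le\sqrt n$ I would take $m=1$ and adjust the amplitude, trading the dual strong concavity against gradient size to get per-link cost $\sqrt n\kappa$.

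The delicate part is showing that $\norm{\nabla\Phi(\hat x)}>\eps$ while the chain is incomplete, since $y^*(x)$ must propagate correctly along the dual paths. I would rely on the separable corrections of \citet{li} for this. The remaining checks are that $\Phi(0)-\inf\Phi\le\Delta$ and $y^*(0)=0$; these hold by construction given $\eps^2\le c_0L\Delta$, which ensures $T\ge1$. The $\Omega(n)$ term comes from a standard argument that some component must be queried before any progress is possible.
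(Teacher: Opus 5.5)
Your argument for the theorem itself matches the paper's proof. It uses Theorem~\ref{thm:as} for the lower bound. For $\kappa\le\sqrt n$ it uses $\Nexp_\eps\le\N_\eps$ with the fixed-budget RPDA bound of Theorem~\ref{thm:smallupper}. For $\kappa\ge\sqrt n$ it uses the expected-query Catalyst bound with an SVRG inner solver, which is zero-respecting and has $D_y^0=0$.

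One justification is misplaced. To reduce Catalyst's $(n+n^{3/4}\sqrt\kappa)L\Delta\eps^{-2}$ to $O(n+n^{3/4}\sqrt\kappa\,L\Delta\eps^{-2})$ you must dominate the term $nL\Delta\eps^{-2}$. That needs $n\le n^{3/4}\sqrt\kappa$, i.e.\ $\kappa\ge\sqrt n$. The inequality $nL\Delta\eps^{-2}\gtrsim n$ that you cite goes the other way; it only absorbs rounding of the outer iteration count and the additive $n$. The paper invokes both facts.

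If you mean to reprove the lower bound rather than cite Theorem~\ref{thm:as}, your sketch has three gaps that the paper closes.

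First, the chain length is not $L\Delta\eps^{-2}$. The barrier $\norm{\nabla\Phi}>hs/4$ forces $s\asymp\eps/h$, and the gap condition $hs^2d\le\Delta$ then forces $d\asymp h\Delta\eps^{-2}$. With one internal edge per link, every link costs $\Theta(n)$. The coefficient $\sqrt n\,\kappa$ comes from $d$ shrinking by $h/L\asymp\kappa/\sqrt n$. That reduction is forced by the separable correction $h(\rho-1)$ with $\rho-1=\mu/w=16\sqrt n/\kappa$. Consequently $\eps^2\le c_0L\Delta$ does not make the chain nontrivial when $\kappa<\sqrt n$; that needs $\eps^2\lesssim L\Delta\kappa/\sqrt n$. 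The paper covers the remaining range $c_0L\Delta\kappa/\sqrt n<\eps^2\le c_0L\Delta$ with the hidden-component quadratic of Appendix~\ref{app:additive}, since there $\sqrt n\,\kappa L\Delta\eps^{-2}<n/c_0$.

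Second, the dual path is infeasible for $\kappa$ near $1$. The second-moment bound $\mu^2+8\mu w+8nw^2$ on the component dual Hessians leaves almost no room for $w$, and hence for $h$, when $\mu\approx L$. At $\kappa=1$ the average function can have no primal--dual coupling at all (Proposition~\ref{prop:boundary}). The paper therefore uses a colored primal chain with a decoupled dual variable for $1\le\kappa<2$ (Appendix~\ref{app:smallkappa}), which supplies the needed $\sqrt n$ coefficient.

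Third, since the theorem concerns $\Nexp_\eps$, an expected per-edge search cost of $\Omega(n)$ is not enough. You need a constant-probability delay, which the paper obtains from Paley--Zygmund, combined with Markov's inequality on the stopping time.
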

\begin{proof}
Theorem~\ref{thm:as} gives the lower bound in both regimes. For $\kappa\le\sqrt n$, RPDA attains the matching upper bound with a fixed query budget by Theorem~\ref{thm:smallupper}. For $\kappa\ge\sqrt n$, the expected-query Catalyst bound \cite[Corollary 4.1 and Section 4.2]{zhang} matches the lower bound because $n^{3/4}\sqrt\kappa\ge n$ and the accuracy restriction ensures $L\Delta\eps^{-2}\ge1$. Both methods are zero-respecting; Appendix~\ref{app:catalyst} records the imported accuracy guarantee and oracle implementation.
\end{proof}
Use RPDA below $\sqrt n$ and Catalyst above it; at the transition both accuracy-dependent terms equal $nL\Delta\eps^{-2}$. RPDA's logarithms arise only from initialization and refinement. Catalyst retains the polylogarithmic factors of \citet[Appendix D.4]{zhang}. The lower bound and RPDA guarantee follow.

\begin{theorem}[Averaged smoothness]\label{thm:as}
There are universal constants $c_0,c_1>0$ such that, for $n\ge2$, $\kappa=L/\mu\ge1$, and $0<\eps^2\le c_0L\Delta$,
\begin{equation}\label{eq:mainlower}
 \N_\eps\bigl(\Fas(n,L,\mu,\Delta)\bigr)
 \ge c_1\left[n+\min\{\sqrt n\,\kappa,n^{3/4}\sqrt\kappa\}\,L\Delta\eps^{-2}\right].
\end{equation}
Every component of the hard instances is $\mu$-strongly concave in $y$, with a quadratic dual term and at most bilinear coupling. The same order holds for worst-case expected query complexity, for $\E\norm{\nabla\Phi(\hat x)}\le\eps$, and for success probability at least $2/3$ under a fixed budget.
\end{theorem}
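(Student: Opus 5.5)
The plan is to prove the two terms of \eqref{eq:mainlower} separately and sum them, since $\max\{a,b\}\ge(a+b)/2$.

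\textbf{The $\Omega(n)$ term.} This is a standard hiding argument. Take one coordinate direction whose gradient information is owned by a single uniformly random component $j$. Before $j$ has been queried, zero-respecting keeps both $x$ and the output $\hat x$ at zero in that coordinate. Choose the scale so that $\norm{\nabla\Phi(0)}^2>2\eps^2$; the restriction $\eps^2\le c_0L\Delta$ makes this compatible with the budget $\Delta$. Then any algorithm using fewer than $n/2$ queries misses $j$ with probability at least $1/2$. A Markov argument then gives the bound for expected counts, for fixed budgets, and for probability-$2/3$ success.

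\textbf{The accuracy-dependent term.} This is the main work. I would build a primal chain in the style of \citet{cdhs}: $T\asymp L\Delta\eps^{-2}/(\text{scaling})$ coordinates $x_1,\dots,x_T$ with the usual $\Psi,\Phi$ link functions. These guarantee $\norm{\nabla\Phi(x)}>\eps$ unless the last coordinate is exposed. Consecutive primal coordinates $x_k,x_{k+1}$ are not coupled directly. Instead they are coupled through a dual path $y_{k,1},\dots,y_{k,m}$ with bilinear edges and quadratic $-\frac\mu2\norm y^2$ terms, following \citet[Eqs.~(6)--(10)]{li}. The separable corrections ensure that $y^*(x)$ transmits $x_k$ to the next link with attenuation controlled by $\mu$, which forces $m\asymp\sqrt{\kappa'}$, where $\kappa'$ is the effective condition number of the path.

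Each internal edge of a path is placed in a component chosen uniformly at random, scaled by $\sqrt n$ so that \eqref{eq:as} holds on average. Adjacent edges get distinct owners, so the edges must be discovered sequentially. Exposing each edge therefore costs $\Omega(n)$ expected queries. The key amplitude choice is to allow edge weights of size $\sqrt n L$ under averaged smoothness, so that the effective path condition number becomes $\kappa/\sqrt n$. This gives path length $m\asymp\sqrt{\kappa/\sqrt n}$ when $\kappa\ge\sqrt n$, and a cost of $n\sqrt{\kappa/\sqrt n}=n^{3/4}\sqrt\kappa$ per primal link.

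For $\kappa\le\sqrt n$, take $m=1$ and reduce the amplitude so that a single random-owner edge with weight of order $\kappa$-dependent size carries the link. The goal is a per-link cost of $\sqrt n\kappa$. Equivalently, rescale the chain so that the number of links is $L\Delta\eps^{-2}\cdot\kappa/\sqrt n$, each costing $n$. Multiplying the cost per link by the number of links $\asymp L\Delta\eps^{-2}$ (after tuning $\lambda,\sigma$ scalings to meet $\Delta$ and $L$) gives the bound.

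\textbf{Probabilistic step.} Define the progress as the number of exposed dual edges. By zero-respecting and the distinct-owner structure, each query exposes at most one new edge, and only if it hits the unique owner. Progress after $t$ queries is then dominated by a sum of geometric waiting times with mean $n$ each. A Chernoff bound shows that fewer than $c\,nmT$ queries expose all $mT$ edges with probability at most $1/3$. Adaptivity and randomization are harmless, because owners are independent of the transcript until they are hit.

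\textbf{Main obstacle.} The hardest step will be verifying the dual-path instance analytically. Three things must hold simultaneously. First, $y^*(x)$ has an explicit decaying profile so that $\nabla\Phi$ stays large until the chain end is reached. Second, $\Phi(0)-\inf\Phi\le\Delta$ and $y^*(0)=0$. Third, the averaged smoothness constant, including the $\sqrt n$-amplified edges, stays at $O(L)$. I would first attempt this by computing $y^*$ exactly from the tridiagonal linear system of each path.
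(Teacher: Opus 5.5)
Your architecture matches the paper's. You use a hidden component for the additive $n$, combined through $\max\{u,v\}\ge(u+v)/2$. Consecutive primal coordinates are linked only through Laplacian dual paths with bilinear endpoint coupling. Internal dual edges go to uniformly random owners, adjacent owners distinct, at component weight $nw=\Theta(\sqrt n\,L)$, so $w/\mu=\Theta(\kappa/\sqrt n)$ and the path length is $\Theta(\sqrt{\kappa/\sqrt n})$. Below $\sqrt n$ you use a single internal edge (that is, $m=2$ dual coordinates, not $m=1$) with reduced amplitude.

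The primal chain you name, however, is incompatible with this mechanism. Maximizing a block gives $\max_y\{c(uy_1-vy_m)-\frac12y^\top Hy\}=\frac h2(u-v)^2+\frac{h(\rho-1)}2(u^2+v^2)$, where $h=c^2(H^{-1})_{1m}$ and $\rho=(H^{-1})_{11}/(H^{-1})_{1m}$. This is a quadratic form in $(x_j,x_{j+1})$. Products $\Psi(x_j)\Phi(x_{j+1})$ of your link functions would need nonlinear coupling, contrary to the bilinear structure the theorem asserts. You need the quadratic-link chain $F_d(x)=\frac12(x_1-1)^2+\frac12\sum_j(x_j-x_{j+1})^2+\sum_j\Gamma(x_j)$ of \citet{cdhs}. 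Place $h\Gamma$ and the negated diagonal term $-\frac{h(\rho-1)}2(x_j^2+x_{j+1}^2)$ in a common separable primal term. Then $\Phi=hF_d$ exactly, and the barrier $\norm{\nabla F_d(x)}>1/4$ whenever $x_d=x_{d+1}=0$ is imported rather than derived from a profile of $y^*$.

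The same cancellation explains the amplitude reduction you leave unexplained. Averaged smoothness caps the edge weight at $w=\Theta(L/\sqrt n)$, so with one internal edge $\rho-1=\mu/w=\Theta(\sqrt n/\kappa)$. The correction then stays $O(L)$-smooth only if $h=O(L\kappa/\sqrt n)$. Since the number of links is $\Theta(h\Delta\eps^{-2})$, this yields the coefficient $\sqrt n\,\kappa$.

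Two further steps fail as written. First, in the search argument, failed queries are informative because they exclude the tested labels. The frontier owner is therefore not independent of the transcript before it is hit. Nor is the waiting time dominated by a geometric variable with mean $n$: a search without replacement always succeeds within $n$ calls. The correct coupling makes the frontier owner uniform over the untested candidates, with all later owners still fresh. This gives independent ranks uniform on $\{1,\dots,n\}$ or $\{1,\dots,n-1\}$, each with mean at least $n/2$. A second-moment (Paley--Zygmund) bound then gives $S>nJ/8$ with constant probability for every $J\ge1$, so the conclusion survives with different constants.

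Second, and more seriously, the dual-path instance degenerates as $\kappa\downarrow1$. With $K_i=\mu I+nwP_i$ one has $\frac1n\sum_iK_i^2\succeq\mu^2I+2\mu wP$. At $\mu=L$ every nonzero edge already violates averaged smoothness, and near $\kappa=1$ the admissible $w$ and $h$ shrink to zero. The range $1\le\kappa<2$ therefore needs a separate instance. One option is a decoupled $-\frac\mu2y^2$ with the primal chain edges themselves randomly owned at component weight $nh_0$, $h_0=\Theta(L/\sqrt n)$, which gives $\Omega(\sqrt n\,L\Delta\eps^{-2})$.

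Two smaller points remain. For $\kappa<\sqrt n$ the chain has enough links only when $\eps^2\lesssim L\Delta\kappa/\sqrt n$. In the rest of the stated range $\sqrt n\,\kappa L\Delta\eps^{-2}=O(n)$, so the hidden-component term must cover it. For that term, take $\norm{\nabla\Phi(0)}\ge4\eps$; the margin $\sqrt2\,\eps$ does not survive Markov's inequality or the expected-norm criterion.
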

The first branch of \eqref{eq:mainlower} is matched by RPDA. All lower bounds concern Definition~\ref{def:zr}.

\begin{theorem}[RPDA upper bound]\label{thm:smallupper}
For $n\ge2$, $1\le\kappa\le\sqrt n$, and $\eps>0$, Algorithm~\ref{alg:recursiveprox} is zero-respecting and attains \eqref{eq:criterion} on $\Fas(n,L,\mu,\Delta)$ with a fixed query budget of
\begin{equation}\label{eq:smallupper}
 \tO\!\left(n+\sqrt n\,\kappa L\Delta\eps^{-2}\right).
\end{equation}
For $\kappa>\sqrt n$, RPDA still attains \eqref{eq:criterion}, with fixed query budget $\tO((n+\kappa^2)(1+L\Delta\eps^{-2}))$. Only the average function must be strongly concave in $y$; no component proximal oracle is used.
\end{theorem}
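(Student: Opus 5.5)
The proof runs through the proximal-point structure. Fix a regularization parameter $\lambda=2L$ and define the proximal subproblem at center $\bar x$,
\[
 F_{\bar x}(x,y)=f(x,y)+L\norm{x-\bar x}^2 .
\]
Under Assumption~\ref{ass:as}, the average $f$ is $L$-smooth, so $F_{\bar x}$ is $L$-strongly convex in $x$ and $\mu$-strongly concave in $y$, with condition number $O(\kappa)$. Its saddle point defines the Moreau-type map $x^+(\bar x)$, and the standard envelope argument gives
\[
 \Phi(x^+)+L\norm{x^+-\bar x}^2\le\Phi(\bar x),
\]
together with $\norm{\nabla\Phi(x^+)}\le O(L)\norm{x^+-\bar x}$ up to the inexactness error. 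Summing over $K=O(L\Delta\eps^{-2})$ outer steps, and using exact dual initialization $y^*(0)=0$ so that no initial dual error appears, gives
\[
 \sum_k\norm{x_{k+1}-x_k}^2\le\Delta/L .
\]
Hence some iterate has $\norm{\nabla\Phi}^2\lesssim\eps^2$, provided each subproblem is solved to accuracy $\delta_k\approx c\,\norm{x_{k+1}-x_k}^2+\eps^2/L^2$. A uniformly random output index then converts this into the expectation criterion~\eqref{eq:criterion}.

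Each subproblem is solved by SPIDER-type gradient descent--ascent on the strongly-convex--strongly-concave $F_{\bar x}$, with epoch length $m=\sqrt n$, minibatch size $b=\sqrt n$, and step size $\eta\approx 1/L$ in $x$ and $\approx\mu/L^2$ in $y$. The key design choice is that the recursive estimator $v$ is \emph{not} reset at new centers. The only change in the gradient of $F$ when the center moves is the known deterministic term $2L(\bar x_{k+1}-\bar x_k)$, which is added exactly. The estimator error therefore obeys the usual SPIDER recursion,
\[
 \E\norm{v_t-\nabla f(z_t)}^2\le \frac{L^2}{b}\sum_{s\in\text{epoch}}\E\norm{z_{s+1}-z_s}^2 ,
\]
where averaged smoothness suffices because only mean-squared differences enter. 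Inside a subproblem, a Lyapunov function mixing $\norm{x-x^+}^2$ and $\norm{y-y^*}^2$ contracts by a factor $1-\Theta(1/\kappa)$ per iteration, apart from the estimator error. Running $T=O(\kappa\log(\cdot))$ inner steps per outer step, with total queries $n\cdot(\#\text{epochs})+b\cdot(\#\text{steps})$, gives
\[
 KT\cdot b=\tO(\sqrt n\,\kappa L\Delta\eps^{-2}),
\]
plus full-gradient refreshes every $m$ steps costing
\[
 n\cdot KT/m=\tO(\sqrt n\,\kappa L\Delta\eps^{-2}),
\]
plus one initial $n$. The condition $\kappa\le\sqrt n$ makes $T\le m$, so epochs may straddle centers without extra refreshes. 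For $\kappa>\sqrt n$, one takes $b=m=\kappa$ instead, and the same accounting gives $\tO((n+\kappa^2)(1+L\Delta\eps^{-2}))$.

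The main obstacle is the total-movement step: showing that the error accumulated by the estimator, which is summed across many subproblems within one epoch, is dominated by the descent. I would bound $\sum_t\norm{z_{t+1}-z_t}^2$ over the whole run by $O(\kappa)\cdot\sum_k\norm{x_{k+1}-x_k}^2$ plus contraction terms. The error inequality then closes into a global potential,
\[
 \Phi(\bar x_k)+c_1L\norm{y_k-y^*(\bar x_k)}^2/\kappa+\dots ,
\]
which decreases by $\Omega(L\norm{x_{k+1}-x_k}^2)$. Constants are tuned so that the factor $\eta^2L^2m/b\le 1/(C\kappa)$ absorbs the error. Because the schedule fixes $K$ and $T$ in advance, the query budget is fixed.

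A final refinement step then upgrades the guarantee from the proximal criterion to $\nabla\Phi$ itself. At the selected center, I would run an extra $O(\kappa\log(\kappa L\Delta/\eps^2))$ steps of full-gradient ascent in $y$ with $x$ frozen, costing $\tO(n\kappa)\le\tO(n^{3/2})$. That is too much, so instead I would use SPIDER ascent costing $\tO(n+\sqrt n\kappa)$, which makes $\norm{y-y^*(\hat x)}$ small enough that $\nabla_x f(\hat x,y)$ certifies $\nabla\Phi(\hat x)$. Zero-respecting holds because every iterate is a linear combination of returned gradients and the center, which starts at $0$. Only strong concavity of the average is used, since the contraction analysis involves only $\nabla f$ and mean-squared smoothness.
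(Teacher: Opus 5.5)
Your architecture is the paper's: proximal centers, a recursive estimator kept across centers with $2L(x-\bar x)$ added exactly, epochs of $\Theta(\sqrt n)$ steps with batch $\Theta(\sqrt n)$, and a final refinement. The problem is that the two steps carrying the proof do not close as you describe them.

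\textbf{First gap: the total-movement step loses the factor $\kappa$.} Your total-movement argument loses exactly the factor $\kappa$ that the theorem saves. You bound the squared movement by $O(\kappa)\sum_k\norm{x_{k+1}-x_k}^2$, and you must also pay the $\kappa$-fold amplification of estimator error over $\Theta(\kappa)$ contracting inner steps. Together these force $b\gtrsim\kappa\sqrt n$. Your own condition $\eta^2L^2m/b\le1/(C\kappa)$ says the same thing, and it fails for $\eta\approx1/L$, $m=b=\sqrt n$, where the left side is $\Theta(1)$. Enforcing it gives back the SREDA-type rate $\sqrt n\,\kappa^2L\Delta\eps^{-2}$.

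The missing idea is a per-subproblem movement bound that does not depend on the number of inner steps. Use the potential $W=G_a(y)+L\norm{x-p_a(y)}^2$, and write $a_j$ for the primal distance to $p_a(y_j)$ and $d_j$ for the dual step. Lemma~\ref{lem:upperinner} proves $P_{j+1}\le(1-\tfrac1{4\kappa})P_j-\tfrac L4a_j^2-\tfrac L2\norm{d_j}^2+4e^2/L$. This charges every move to the potential and yields $V\le C_0(W/L+Me^2/L^2)$. Doob's inequality $\E e^2\le4L^2\E V/b$ then lets $b=\Theta(\kappa q)$ absorb both the $Me^2$ term and the amplified error $17\kappa e^2/L$ against $W$. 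Lemma~\ref{lem:upperwarm}, $W_{a'}\le2W_a+12L\norm{a'-a}^2$, carries this across centers.

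Your dual step $\mu/L^2$ is also incompatible with a $1-\Theta(1/\kappa)$ contraction: in the worst case it contracts the dual gap only by $1-\mu^2/L^2$ per step. The paper takes step $1/(4L)$ on the $2L$-smooth dual function $D_a$, which is legitimate because of the six nested primal steps.

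\textbf{Second gap: the refinement cannot upgrade the guarantee.} The criterion depends only on $\hat x$, so ascent in $y$ with $x$ frozen leaves $\norm{\nabla\Phi(\hat x)}$ unchanged. Certifying a gradient does not shrink it. The selected center is only nearly stationary for the envelope $H$. Since $\nabla\Phi(p(\bar x))=\nabla H(\bar x)$ while $\Phi$ is only $L(1+\kappa)$-smooth, $\norm{\nabla\Phi(\bar x)}$ can exceed $\norm{\nabla H(\bar x)}$ by a factor $\Theta(\kappa)$. The scalar instance $f=\tfrac L2xy-\tfrac\mu2y^2$, with $\Phi(x)=\tfrac{L\kappa}8x^2$, attains this.

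Nor can every main-loop iterate be made $\Phi$-stationary. That would need subproblem accuracy of order $\eps^2/(L^2\kappa^2)$ in squared distance; your $\eps^2/L^2$ is short by $\kappa^2$. With $b=\Theta(\sqrt n)$, the estimator error is proportional to the epoch's movement, so each subproblem reduces its potential only by a constant factor. Relative accuracy $\kappa^{-2}$ in every subproblem would require a polynomially larger batch.

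The paper's refinement moves $x$. At the uniformly selected center $x_J$, it resets the estimator and runs $R_1=O(\log(2+L\kappa^2\Delta\eps^{-2}))$ fixed-center primal--dual calls, each costing $O(n+\kappa^2)$, until $\E P_{\mathrm{fin}}\le\eps^2/[16L(1+\kappa)^2]$. It then returns the primal coordinate, so $\E\norm{\nabla\Phi(\hat x)}^2\le2\E\norm{\nabla H(x_J)}^2+8L(1+\kappa)^2\E P_{\mathrm{fin}}\le\eps^2$. The same device, with $R_0=O(\log(2+\kappa))$ calls, handles the first subproblem. Its potential can be of order $\kappa\Delta$ despite $y^*(0)=0$, so your remark that no initial dual error appears does not hold for the subproblem potential.
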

\begin{theorem}[Individual smoothness]\label{thm:is}
There are universal constants $c_0,c_2>0$ such that, for $n\ge2$, $\kappa\ge1$, and $0<\eps^2\le c_0L\Delta$,
\begin{equation}\label{eq:islower}
 \N_\eps\bigl(\Bis(n,L,\mu,\Delta)\bigr)
 \ge c_2\left(n+\min\{\kappa,\sqrt{n\kappa}\}\,L\Delta\eps^{-2}\right).
\end{equation}
The dual terms are quadratic. For $\kappa\ge2$, the dual-path instances have a common separable primal term; for $1\le\kappa<2$, a common minimization chain suffices. The fixed-budget and expected-query conclusions of Theorem~\ref{thm:as} both apply.
\end{theorem}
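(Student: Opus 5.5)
The lower bound in Theorem~\ref{thm:is} is the maximum of two separate bounds, one for each term: $n$ and $\min\{\kappa,\sqrt{n\kappa}\}L\Delta\eps^{-2}$. The sum is at most twice that maximum, so proving each separately suffices.

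\textbf{The $\Omega(n)$ term.} I would embed a minimization instance that hides a component. Take $f_i(x,y)=g_i(x)-\frac{\mu}{2}\norm{y}^2$, where only a uniformly random component $i^\star$ carries a linear tilt $g_{i^\star}(x)=c\sqrt{\eps}\dots$, scaled so that $\nabla\Phi(0)$ has norm $2\eps$. The tilt must be individually $L$-smooth and keep $\Phi(0)-\inf\Phi\le\Delta$. I would realize it as a one-coordinate bounded quadratic-plus-linear bump; this works because $\eps^2\le c_0L\Delta$. A zero-respecting algorithm outputs $\hat x=0$ until it queries $i^\star$. Before $n/2$ queries, this happens with probability at most $1/2$, so $\E\norm{\nabla\Phi(\hat x)}^2\ge 2\eps^2>\eps^2$.

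\textbf{The main term.} I would choose $\kappa\ge2$ and use the dual-path construction of Section~\ref{sec:construction}, adapted to IS. The primal chain of \citet{cdhs} has $T\asymp L\Delta\eps^{-2}$ links, with a common separable primal term $g(x)=\sum_k\psi(x_k)$ rescaled to amplitude $\eps/L$. Consecutive primal coordinates $x_k,x_{k+1}$ are joined by dual paths $y_{k,1},\dots,y_{k,m}$ with bilinear edges of weight $\asymp L$, following \citet{li}. The quadratic dual term $\frac{\mu}{2}\norm{y}^2$ plus the path Laplacian gives $y^*$ an attenuation that transmits the primal chain only if $m\lesssim\sqrt\kappa$.

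Each internal edge is assigned to a random owner component. Under IS an edge cannot be amplified by $\sqrt n$, so I would choose between two options:
\begin{itemize}
\item If $\kappa\le n$, give every edge a random owner, so each edge costs about $n/(\text{number of edges per owner})$ queries to discover. With path length $m\asymp\sqrt{\kappa/n}\cdot(\cdot)$, balancing the costs gives a per-link cost of $\sqrt{n\kappa}$.
\item If $\kappa\ge n$, put the whole path in a single function (or use the deterministic cost $\kappa$, since $m\asymp\sqrt\kappa$ paths each need $\sqrt\kappa$ queries), giving a per-link cost of $\kappa$.
\end{itemize}
Concretely, I would take path length $m=\lfloor\sqrt{\min\{\kappa,n\}}\rfloor$. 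Each primal link would then cost $\Omega(m\cdot\min\{n,\sqrt\kappa\cdot\text{stuff}\})$ queries in expectation, via a geometric-waiting/Chernoff argument over the $T$ links. Multiplying over $T$ links gives the bound. The key check is that, before the chain is discovered, $\norm{\nabla\Phi(\hat x)}>\eps$ holds whenever half the primal coordinates are undiscovered. This is the standard \citet{cdhs} gradient property, applied after computing $y^*$ in closed form on the path.

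\textbf{Case $1\le\kappa<2$.} Here the rate is $L\Delta\eps^{-2}$, up to constants. I would use a common minimization chain (the same chain in every component) with a decoupled quadratic dual term, which costs one query per link.

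\textbf{Fixed budget versus expected queries.} I would obtain the expected-query version from the high-probability version by Markov's inequality, in the same way as for Theorem~\ref{thm:as}.

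\textbf{Main obstacle.} The hard step is the calibration in the main term: choosing the edge weights and the number of owners so that the instance is individually $L$-smooth, not just averaged-smooth, while $y^*$ still transmits a constant fraction of the signal across the path. I would first verify the closed-form attenuation of the path's tridiagonal inverse at the chosen $m$.
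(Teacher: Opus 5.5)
\textbf{There is a genuine gap in the main term.} Your $\Omega(n)$ hidden-tilt instance, the decoupled chain for $1\le\kappa<2$, and the Markov conversion all match the paper. The problem is that the calibration you defer is not just unfinished; it points the wrong way.

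\textbf{The path-length and per-link costs you claim cannot hold.} Under individual smoothness, the owner of an edge carries weight $nw$, so $\norm{K_i}\le L$ forces $w=O(L/n)$. This holds equally if a whole path sits in one component, because the average still sees weight $O(L/n)$. The attenuation-free path length is therefore $\sqrt{w/\mu}=O(\sqrt{\kappa/n})$, not your $\lfloor\sqrt{\min\{\kappa,n\}}\rfloor$, and it drops below one when $\kappa<n$. Your two regimes are also swapped.
\begin{itemize}
\item For $\kappa\ge n$ you claim a per-link cost of $\kappa$ over $T\asymp L\Delta\eps^{-2}$ links. That would give $\Omega(\kappa L\Delta\eps^{-2})$, which contradicts the $\tO(\sqrt{n\kappa}\,L\Delta\eps^{-2})$ upper bound of Proposition~\ref{prop:upper} once $\kappa/n$ exceeds the logarithmic factors. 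In fact a common path of length $\sqrt\kappa$ costs only $\sqrt\kappa$ queries per link. A single-owner path costs about $n+m$, since after its owner is found each further edge takes one query.
\item For $\kappa\le n$ you claim $\sqrt{n\kappa}$. But Proposition~\ref{prop:iscap} shows that a positive-weight dual path with $h\asymp L$ and $\rho-1=O(1)$ costs only $O(\kappa)$ per link there.
\end{itemize}
Also, the cost of exposing an edge is governed by the number of its owners ($n/r_k$), not by the number of edges per owner.

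\textbf{The missing idea is to give up both constant-fraction transmission and $T\asymp L\Delta\eps^{-2}$.}
\begin{itemize}
\item For $2\le\kappa<n$, the paper takes $w=L/(16n)$ and a single internal edge ($m=2$), and accepts the attenuation $\rho-1=\mu/w=16n/\kappa$. It cancels the resulting separable diagonal term in \eqref{eq:bridge} exactly inside the common primal term \eqref{eq:g}. That correction has curvature $h(\rho-1)$, so smoothness forces $h=O(L\kappa/n)$; the paper takes $h=L\kappa/(2000n)$. The chain then has only $d=\Theta(h\Delta\eps^{-2})$ links, each needing $\Theta(n)$ adaptive owner tests, so $nd=\Omega(\kappa L\Delta\eps^{-2})$.
\item For $\kappa\ge n$, the paper keeps $h=L/2000$ and uses $m-1\ge\frac1{12}\sqrt{\kappa/n}$ internal edges. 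Each edge has an independently drawn owner distinct from its predecessor, so a link costs $n(m-1)=\Omega(\sqrt{n\kappa})$.
\end{itemize}

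\textbf{Two further steps are missing.} First, because $h$ shrinks for $\kappa<n$, the chain exists only for $\eps^2\lesssim L\Delta\kappa/n$. On the remaining range $c_0L\Delta\kappa/n<\eps^2\le c_0L\Delta$ one has $\kappa L\Delta\eps^{-2}<n/c_0$, so your hidden-tilt instance covers it; your plan does not address this. Second, the search cost must hold against adaptive component selection. The uniform-rank coupling of Lemma~\ref{lem:search} provides this, whereas a geometric-waiting bound only describes random sampling.
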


\begin{proposition}[Bilinear upper bound from existing methods]\label{prop:upper}
For $0<\eps^2\le L\Delta$, an inexact proximal-point method using the separable finite-sum saddle-point solver of \citet{jst} attains \eqref{eq:criterion} on $\Bis(n,L,\mu,\Delta)$ with
\begin{equation}\label{eq:isupper}
 \tO\!\left((n+\sqrt{n\kappa})L\Delta\eps^{-2}\right)
\end{equation}
expected IFO queries.
\end{proposition}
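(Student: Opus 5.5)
We prove Proposition~\ref{prop:upper} by an inexact proximal-point reduction that turns each outer step into a strongly-convex--strongly-concave separable saddle-point problem, and then invoke the method of \citet{jst} on each subproblem. Fix $\lambda=2L$. Given $x_k$, we form
\begin{equation*}
 F_k(x,y):=\frac1n\sum_{i=1}^n\Bigl[g_i(x)+L\norm{x-x_k}^2+\langle A_ix,y\rangle-h_i(y)\Bigr].
\end{equation*}
Since each $g_i$ is $L$-smooth, $g_i+L\norm{\cdot-x_k}^2$ is $L$-strongly convex and $3L$-smooth. Each $h_i$ is $\mu$-strongly convex and $L$-smooth, and $\norm{A_i}\le L$. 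So $F_k$ is a separable bilinearly coupled finite sum with primal condition number $O(1)$ and dual condition number $\kappa$. The solver of \citet{jst} is a randomized variance-reduced method for $\min_x\max_y \frac1n\sum_i[\phi_i(x)+\langle A_ix,y\rangle-\psi_i(y)]$. It reaches $\delta$-accuracy in the duality gap, or in distance to the saddle point, with expected cost $\tO(n+\sqrt{n}\,(\text{coupling/strong-monotonicity terms}))$ per logarithmic factor. With our parameters this is $\tO\bigl((n+\sqrt{n\kappa})\log(1/\delta)\bigr)$, since the cross term scales like $\sqrt n\,L/\sqrt{L\mu}$. I will check exactly which quantity of \citet{jst} yields $\sqrt{n\kappa}$ rather than $\sqrt n\kappa$. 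I expect this parameter matching to be the main obstacle. The plan is to confirm that their complexity depends on $\norm{A_i}/\sqrt{\mu_x\mu_y}$ in an averaged sense, and that individual smoothness gives $\norm{A_i}\le L$ uniformly, which is what they require.

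Next come the outer iterations. Define $\Psi_k(x)=\max_y F_k(x,y)=\Phi(x)+L\norm{x-x_k}^2$. This function is $L$-strongly convex, and its minimizer $x_{k+1}^*$ satisfies $\Psi_k(x_{k+1}^*)\le\Phi(x_k)$. We let $x_{k+1}$ be an approximate minimizer with $\norm{x_{k+1}-x_{k+1}^*}\le\delta$. The standard proximal-point descent argument uses $\nabla\Phi(x_{k+1}^*)=-2L(x_{k+1}^*-x_k)$. It gives
\begin{equation*}
 \Phi(x_{k+1}^*)+\frac{1}{4L}\norm{\nabla\Phi(x_{k+1}^*)}^2\le\Phi(x_k).
\end{equation*}
The smoothness of $\Phi$ is $O(\kappa L)$, so an $O(\delta)$ perturbation changes $\Phi$ and $\nabla\Phi$ by polynomially bounded amounts. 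Choosing $\delta$ polynomially small in $\eps,\kappa^{-1},L,\Delta$ keeps the telescoping loss below $\Delta$. The same choice keeps the gradient perturbation below $\eps/2$. After $K=O(L\Delta\eps^{-2})$ outer steps, we output $x_{k+1}^*$'s proxy $x_{k+1}$ for a uniformly random $k$. This gives $\E\norm{\nabla\Phi(\hat x)}^2\le\eps^2$. The solver is randomized, so its accuracy holds in expectation. We therefore carry expected squared distances through the argument and apply Jensen's inequality where needed.

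We also need to control the initial distance for the warm start of subproblem $k$, since it enters only through $\log(1/\delta)$. We warm-start at $(x_k,y_k)$, where $y_k$ is the dual output of the previous subproblem. The distance to the new saddle point is bounded by $O(\kappa)(\norm{x_k-x_{k+1}^*}+\delta)$, which is polynomially bounded in terms of $\Delta/L$. At $k=0$, exact dual initialization gives $y^*(0)=0$. Hence every subproblem costs $\tO(n+\sqrt{n\kappa})$ expected queries. The total over $K$ steps is $\tO((n+\sqrt{n\kappa})L\Delta\eps^{-2})$. The hypothesis $\eps^2\le L\Delta$ ensures $K\ge1$, which absorbs the additive constant.
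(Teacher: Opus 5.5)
Your reduction follows the paper's route. It uses the same subproblem $f+L\norm{x-x_k}^2$ and the same reading of the coefficient of \citet{jst}: primal ratio $O(1)$, dual ratio $\sqrt\kappa$, and coupling ratio $L/\sqrt{L\mu}$, averaged to give $n+\sqrt{n\kappa}$. It also uses the same telescoping proximal-point descent, and warm starts that keep $\kappa$ inside logarithms. The paper measures inexactness by the saddle gap of the subproblem; you measure it by distance to the exact proximal point. That is a harmless variant, provided you absorb the first-order term $\langle\nabla\Phi(x^*_{k+1}),x_{k+1}-x^*_{k+1}\rangle$ by Young's inequality against $\norm{\nabla\Phi(x^*_{k+1})}^2/(4L)$. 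Calling it ``polynomially small'' is not enough, because $\norm{\nabla\Phi(x^*_{k+1})}$ has no a priori bound.

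Two steps on which the query count depends are missing.

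First, the solver of \citet{jst} is analyzed with separate oracles for $\nabla g_i$, $\nabla h_i$, and products with $A_i$ and $A_i^\top$. An IFO query returns only the sums $\nabla g_i(x)+A_i^\top y$ and $A_ix-\nabla h_i(y)$. You must simulate their oracles before their cost counts as an IFO cost. The paper caches $\nabla f_i(0,0)$ for every $i$, using $n$ calls, and then queries at $(x,0)$ and $(0,y)$. This recovers each separate quantity with constant overhead and keeps the method zero-respecting.

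Second, your warm-start distance $O(\kappa)(\norm{x_k-x^*_{k+1}}+\delta)$ is random and bounded only in expectation, through $\E[\Phi(x_k)-\inf\Phi]$. The algorithm cannot see it, so the inner iteration count cannot be chosen from it. You need the solver's guarantee in relative form, $\E[\text{output error}\mid\text{history}]\le\theta\cdot(\text{input error})$, with a budget depending only on $\theta$ and the class parameters. You then need an induction over the centers showing that the expected input error stays below a fixed, polynomially bounded $B$. One deterministic contraction factor then works at every center: the paper's $\theta=\delta/B$ for gaps, or $\delta^2/B$ in your squared-distance normalization. This is the role of \eqref{eq:jstrelative} and \eqref{eq:jstwarm} in the paper. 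With these two additions, your argument goes through.
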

For $2\le\kappa<n$, reducing the dual-path amplitude to $h=\Theta(L\kappa/n)$ gives the lower bound; Appendix~\ref{app:issmallkappa} covers $1\le\kappa<2$. The minimization lower bound of \citet{densehiding} supplies a $\sqrt n$ coefficient for all $\kappa$, while RPDA improves the upper bound below $\sqrt n$. Table~\ref{tab:comparison-is} combines these results; Proposition~\ref{prop:iscomparison} proves the comparison.

The remaining coefficient gap is, up to constants and logarithms, $\kappa$ for $1\le\kappa\le\sqrt n$, $n/\kappa$ for $\sqrt n\le\kappa\le n$, and one for $\kappa\ge n$. Its maximum is $\sqrt n$; additive $n$ terms can reduce the ratio of the full bounds. Proposition~\ref{prop:iscap} limits our diagonal-cancelled dual-path family, including nonuniform weights and multiple owners, without ruling out other IS constructions.

At $\kappa=1$, the class reduces to finite-sum minimization (Proposition~\ref{prop:boundary}). Its rate is $\Theta(n+\sqrt n\,L\Delta\eps^{-2})$ in the stated accuracy range, so an $nL\Delta\eps^{-2}$ lower bound cannot hold uniformly in $\kappa$.

\section{Construction and Proof Overview}\label{sec:construction}
We describe the averaged-smooth instance for $\kappa\ge2$. It combines a primal--dual path with a random assignment of internal dual edges. Appendix~\ref{app:smallkappa} handles $1\le\kappa<2$ by a colored primal chain.

\subsection{Primal chain and dual paths}
Let $x\in\R^{d+1}$ and let $\Gamma$ be the scalar potential in \eqref{eq:gamma}. Define
\begin{equation}\label{eq:basechain}
 F_d(x)=\frac12(x_1-1)^2+
 \frac12\sum_{j=1}^{d}(x_j-x_{j+1})^2+
 \sum_{j=1}^{d}\Gamma(x_j).
\end{equation}
The chain in \eqref{eq:basechain} is the parameter-one specialization of \citet[Equation (9)]{cdhs}. It has initial gap at most $1/2+10d$, and its gradient has norm greater than $1/4$ whenever $x_d=x_{d+1}=0$; see Appendix~\ref{app:chain}.

For each link $(x_j,x_{j+1})$, introduce a dual block $y^{(j)}\in\R^m$. Set
\begin{equation}\label{eq:parameters}
 w=\frac{L}{16\sqrt n},\qquad
 m=\max\left\{2,\left\lfloor\sqrt{w/\mu}\right\rfloor\right\},
 \qquad h=\frac{L}{2000}\min\left\{1,\frac\kappa{\sqrt n}\right\},
\end{equation}
and let
\begin{equation}\label{eq:dualpath}
 P_m=\sum_{k=1}^{m-1}(e_k-e_{k+1})(e_k-e_{k+1})^\top,
 \qquad H=\mu I+wP_m.
\end{equation}
The coupling uses only the first and last coordinate of each dual block. A separable quadratic correction cancels the extra diagonal term created by dual maximization. Appendix~\ref{app:eliminate} specifies the resulting primal term $g$ and coupling matrix $B$ and verifies that the eliminated objective is exactly $hF_d$. Appendix~\ref{app:chain} also proves the endpoint estimates needed for smoothness.

\subsection{Random assignment to components}
Independently for each block $j$, draw its first edge owner $\sigma_{j,1}$ uniformly from $[n]$. For $k\ge2$, draw $\sigma_{j,k}$ uniformly from $[n]\setminus\{\sigma_{j,k-1}\}$, using independent randomness conditional on the preceding owner. Thus equal-owner edges have disjoint supports.

Let $v_{j,k}$ denote the vector $e_k-e_{k+1}$ embedded in dual block $j$, and define
\begin{equation}\label{eq:instance}
 \begin{gathered}
 P_i=\sum_{(j,k):\,\sigma_{j,k}=i}v_{j,k}v_{j,k}^\top,
 \qquad K_i=\mu I+nwP_i,\\
 f_i(x,y)=g(x)+\langle Bx,y\rangle-\tfrac12y^\top K_i y.
 \end{gathered}
\end{equation}
The average dual Hessian is $K=I_d\otimes H$. Equations~\eqref{eq:instance} and~\eqref{eq:bridge} give
\begin{equation}\label{eq:primal}
 \Phi(x)=hF_d(x),\qquad y^*(0)=0.
\end{equation}
Every $K_i\succeq\mu I$. Disjoint equal-owner edges give $P_i^2=2P_i$, which controls the second moment of the component Hessians. Appendix~\ref{app:smooth} combines this identity with the endpoint estimates to verify global smoothness, including the primal--dual cross terms.

\paragraph{Sequential discovery and stationarity.}
Coordinates are exposed sequentially: $x_j$ precedes $y^{(j)}_1,\ldots,y^{(j)}_m$, which precede $x_{j+1}$. Reaching $x_d$ requires traversing $J=(d-1)(m-1)$ internal edges. Each frontier owner is conditionally uniform among $n$ or $n-1$ candidates, so adaptive queries still incur a constant-probability delay of order $nJ$ (Lemma~\ref{lem:search}). In the chain's accuracy range $\eps^2\le c_0L\Delta\min\{1,\kappa/\sqrt n\}$, scaling by $s=16\eps/h$ and taking $d=\Theta(h\Delta\eps^{-2})$ gives gradient norm greater than $4\eps$ before discovery. The resulting cost is
\begin{equation}\label{eq:product}
 \Omega\bigl(n(d-1)(m-1)\bigr)
 =\Omega\!\left(\min\{\sqrt n\,\kappa,n^{3/4}\sqrt\kappa\}\,L\Delta\eps^{-2}\right).
\end{equation}
Appendix~\ref{app:scaling} proves the fixed-budget and stopping-time bounds. A hidden-component quadratic supplies $\Omega(n)$ and extends the accuracy range to $\eps^2\le c_0L\Delta$ (Appendix~\ref{app:additive}).

\paragraph{Smoothness budgets.}
Averaged smoothness allows $w=\Theta(L/\sqrt n)$, whereas individual smoothness requires $w=\Theta(L/n)$ in this construction. The separable correction permits amplitudes $h=\Theta(L\min\{1,\kappa/\sqrt n\})$ and $h=\Theta(L\min\{1,\kappa/n\})$, respectively. These choices give the two lower bounds; Appendices~\ref{app:smooth} and~\ref{app:is} verify them.

\section{Recursive Proximal Descent--Ascent (RPDA)}\label{sec:smallupper}
Algorithm~\ref{alg:recursiveprox} gives the method behind Theorem~\ref{thm:smallupper}. Its three phases are initialization, proximal iterations, and final refinement; every update is written explicitly below.

\begin{algorithm}[!tb]
\caption{RPDA: Recursive Proximal Descent--Ascent}\label{alg:recursiveprox}
\small
\begin{algorithmic}[1]
\Require $n,L,\mu,\Delta,\eps$; parameters $q,b_0,b,M,T,R_0,R_1$ from \eqref{eq:algorithmparameters}--\eqref{eq:algorithmrefinement}.
\State $z=(x,y)\gets(0,0)$; draw $J$ uniformly from $[T]$.
\For{$s=1,\ldots,R_0+T+R_1$}
  \If{$s\le R_0$} \Comment{initialization}
    \State $a\gets0$, $b_s\gets b_0$, $\mathsf{reset}\gets\mathsf{true}$.
  \ElsIf{$s\le R_0+T$} \Comment{proximal iterations}
    \State $t\gets s-R_0$, $a\gets x$, $b_s\gets b$, $\mathsf{reset}\gets[t\equiv1\pmod q]$.
    \State \textbf{if} $t=J$ \textbf{then} store $(a_*,z_*)\gets(a,z)$.
  \Else \Comment{final refinement}
    \State \textbf{if} $s=R_0+T+1$ \textbf{then} $(a,z)\gets(a_*,z_*)$.
    \State $b_s\gets b_0$, $\mathsf{reset}\gets\mathsf{true}$.
  \EndIf
  \State \textbf{if} $\mathsf{reset}$ \textbf{then} $v=(v_x,v_y)\gets n^{-1}\sum_{i=1}^n\nabla f_i(z)$.
  \For{$k=1,\ldots,7M+6$}
    \If{$k\equiv0\pmod7$} \Comment{one dual step after six primal steps}
      \State $z^+\gets(x,\ y+v_y/(4L))$.
    \Else
      \State $z^+\gets(x-[v_x+2L(x-a)]/(3L),\ y)$.
    \EndIf
    \State Draw $b_s$ independent uniform indices $I$ from $[n]$, with replacement.
    \State $v\gets v+b_s^{-1}\sum_{i\in I}[\nabla f_i(z^+)-\nabla f_i(z)]$.
    \State $z=(x,y)\gets z^+$.
  \EndFor
\EndFor
\State \Return $x$.
\end{algorithmic}
\end{algorithm}

RPDA combines proximal regularization with the recursive estimator of \citet{spider}. The upper bound is proved here by controlling this estimator across subproblems. At center $a$, set $q_a(x,y)=f(x,y)+L\norm{x-a}^2$. Its primal slices are $L$-strongly convex and $3L$-smooth. If $p_a(y)=\arg\min_xq_a(x,y)$, then $p_a$ is $1$-Lipschitz, and $D_a(y)=q_a(p_a(y),y)$ is $\mu$-strongly concave and $2L$-smooth. The minimizer $p_a$ is used only in the analysis: the algorithm takes six primal gradient steps for each dual step.

\paragraph{Estimator reuse.}
A full gradient initializes the estimator at the start of each epoch. The recursive difference update then follows every primal and dual move. Changing the proximal center affects only the known term $2L(x-a)$, so the estimator for $\nabla f$ can be retained. The main phase uses
\[
 q=\max\{1,\lfloor\sqrt n/\kappa\rfloor\},\qquad
 b=\Theta(\kappa q),\qquad M=\Theta(\kappa).
\]
All counts are deterministic. In the initialization phase the center remains zero. In the final phase it remains at the saved center $a_*$, starting from the saved pair $z_*$. These two phases reset at each subproblem and use batches $b_0=\Theta(\kappa)$.

\paragraph{Why the accumulated error is controlled.}
Let $W$ be the input dual gap plus $L$ times the squared distance to $p_a(y)$, and let $e$ be the largest estimator error during one subproblem. Lemma~\ref{lem:upperinner} proves
\begin{equation}\label{eq:upperoverview}
 P_{\mathrm{out}}\le\sigma W+C\kappa e^2/L,\qquad
 \sum_k\norm{z_{k+1}-z_k}^2\le C(W/L+\kappa e^2/L^2),
\end{equation}
where $\sigma$ is a small universal constant. Within an epoch, the estimator error is a martingale. Its maximal second moment is at most $4L^2/b$ times the expected total squared query movement. Substituting the second bound in \eqref{eq:upperoverview} and choosing $b$ a sufficiently large constant times $\kappa q$ allows that error term to be absorbed. A warm-start bound controls the change of potential when the center moves. Thus the errors remain summable across the main iterations. The envelope analysis needs only a constant relative contraction of the potential: no $1/\kappa$ subproblem tolerance or full-gradient stopping test is imposed. The dual-error factor $\kappa$ is paid through $b=\Theta(\kappa q)$, without an additional factor $\kappa$ in the inner iteration count.

\paragraph{Stationarity and query count.}
The outer analysis uses the envelope $H(a)=\min_x\{\Phi(x)+L\norm{x-a}^2\}$, whose gradient is $2L$-Lipschitz independently of $\kappa$. After initialization, the potential estimates imply
\[
 \sum_{t=1}^{T}\E\norm{\nabla H(x_t)}^2\le52L\Delta.
\]
Taking $T=\max\{1,\lceil256L\Delta/\eps^2\rceil\}$ makes a uniformly selected center nearly stationary for $H$. The final refinement computes an approximate proximal point of that center and ensures $\E\norm{\nabla\Phi(\hat x)}^2\le\eps^2$. This step is included in the query bound; no exact primal or dual solution oracle is used.

There are $O(\kappa)$ estimator updates per subproblem and one full-gradient computation per $q$ subproblems. The main phase therefore costs $O(n+T(n/q+\kappa^2q))$. For $\kappa\le\sqrt n$, initialization and refinement together cost $\tO(n+\kappa^2)=\tO(n)$, giving Theorem~\ref{thm:smallupper}. For $\kappa>\sqrt n$, the same algorithm uses $q=1$ and remains valid, but its rate is $\tO((n+\kappa^2)(1+L\Delta\eps^{-2}))$. Catalyst gives the sharper condition-number dependence in the matching accuracy range. Thus Theorem~\ref{thm:matching} uses RPDA below the threshold and Catalyst above it. Appendix~\ref{app:smallupper} supplies all constants and the complete proof.

\section{Numerical Experiments}\label{sec:experiments}
We illustrate Tables~\ref{tab:comparison} and~\ref{tab:comparison-is} on synthetic families with $d_x=d_y=10$, $n=10{,}000$, and $\kappa\in\{\sqrt n/5,\sqrt n,5\sqrt n\}=\{20,100,500\}$. The averaged-smooth family has components
\begin{equation}\label{eq:expfamily}
 f_i(x,y)=\tfrac12x^\top Q_ix+\beta\sum_{k=1}^{d_x}\psi(x_k)+\langle A_ix,y\rangle-\tfrac12y^\top M_iy,
 \qquad \psi(t)=\frac{t^2}{1+t^2},
\end{equation}
with $\beta=0.8$ and centered random perturbations of $Q_i,A_i,M_i$ whose sums vanish, so $f$ is nonconvex in $x$ and $\mu$-strongly concave in $y$; the methods receive the averaged smoothness constant $L_{\mathrm{ms}}=\norm{n^{-1}\sum_iH_i^2}^{1/2}+2\beta$ of Assumption~\ref{ass:as}, with $\mu$ calibrated to the prescribed $\kappa$. The family in $\Bis$ keeps the same primal term and uses $\norm{A_i}\le L$ and $h_i(y)=\tfrac12y^\top M_iy$ with the eigenvalues of every $M_i$ in $[\mu,L]$ and $\lambda_{\min}(n^{-1}\sum_iM_i)=\mu$; the methods receive the individual smoothness constant $L$. All methods use the constants of their analyses with order-one factors set to one and no tuning: RPDA is Algorithm~\ref{alg:recursiveprox} with $M=\lceil\kappa\rceil$, $q=\max\{1,\lfloor\sqrt n/\kappa\rfloor\}$, $b_0=b=\lceil\kappa q\rceil$, and $R_0=2$; the SREDA-type method is SPIDER-GDA with batch $\sqrt n$ and $\kappa$ dual steps per primal step \citep{sreda,zhang}; the Catalyst-type method solves the same proximal subproblem as RPDA with a fresh full gradient at every center \citep{zhang}; the proximal reduction of Proposition~\ref{prop:upper} uses Algorithms~6 and~7 of \citet{jst} as the inner solver. We start from $\norm{x_0}=2$, $y_0=y^*(x_0)$, stop at $\norm{\nabla\Phi(x)}\le5\times10^{-3}$, and report the running minimum of the envelope gradient $\nabla H(x)=2L(x-p(x))$ over three seeds.

Figure~\ref{fig:experiments} reproduces the two regimes of \eqref{eq:matchedcomplexity}: RPDA is fastest at $\kappa=\sqrt n/5$, ties the Catalyst-type method at $\kappa=\sqrt n$, where the two accuracy-dependent terms coincide, and is slower above it, while the $\kappa^2$ dependence of the SREDA-type method appears as its near-horizontal curves for $\kappa\ge\sqrt n$. In Figure~\ref{fig:is-experiments}, RPDA is faster for $\kappa\le\sqrt n$ and the proximal reduction, whose cost is nearly independent of $\kappa$, is faster at $\kappa=5\sqrt n$, so the method with the better bound in Table~\ref{tab:comparison-is} is the faster one on each side of the threshold. These runs illustrate the predicted orderings on one family and do not establish worst-case rates.

\begin{figure}[!tbp]
\centering
\includegraphics[width=0.9\textwidth]{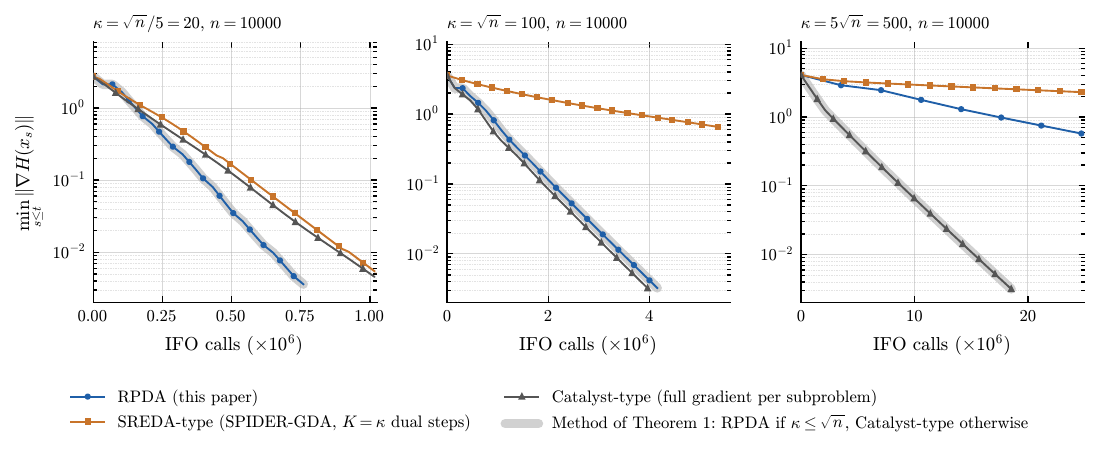}
\caption{Convergence under averaged smoothness (Table~\ref{tab:comparison}): running minimum of the envelope gradient against IFO calls; lines show the seed with median finishing cost and bands span three seeds. The light band marks the method of Theorem~\ref{thm:matching}: RPDA for $\kappa\le\sqrt n$ and the Catalyst-type method for $\kappa>\sqrt n$.}
\label{fig:experiments}
\end{figure}

\begin{figure}[!tbp]
\centering
\includegraphics[width=0.9\textwidth]{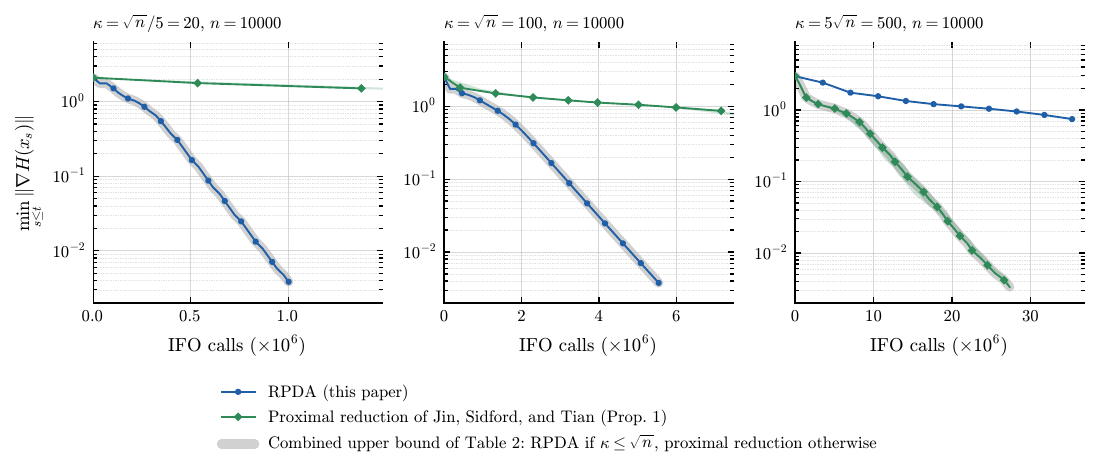}
\caption{Convergence on the bilinear individually smooth subclass (Table~\ref{tab:comparison-is}), with the conventions of Figure~\ref{fig:experiments}. The light band marks the combined upper bound of Table~\ref{tab:comparison-is}: RPDA for $\kappa\le\sqrt n$ and the proximal reduction of Proposition~\ref{prop:upper} for $\kappa>\sqrt n$.}
\label{fig:is-experiments}
\end{figure}

\section{Conclusion}\label{sec:conclusion}
Our main result is the $\tTheta$ characterization \eqref{eq:matchedcomplexity} of averaged-smooth minimax expected query complexity throughout $\kappa\ge1$, in the specified accuracy range and zero-respecting model. Dual paths give the lower bound, and recursive estimation across proximal subproblems closes the small-condition-number gap. Under individual smoothness, Table~\ref{tab:comparison-is} extends the comparison to all $\kappa$ but leaves a polynomial gap at intermediate condition numbers. Our dual-path limitation identifies a restriction of this construction, not of the IS--B class. Unrestricted NC--SC queries, component proximal oracles, and general nonlinear coupling remain open.

\section*{Acknowledgements}
The author thanks Lesi Chen of Tsinghua University for helpful discussions of this open problem and valuable feedback on the initial draft of this manuscript.

Codex with GPT-6 Astra and Claude Science Opus 5.5 were used to assist with the development of the construction and proofs, numerical checks of algebraic identities, the review of related literature, and the preparation and revision of this manuscript. The author is responsible for the mathematical claims, references, and final presentation.

\begingroup
\fontsize{10}{11}\selectfont 
\bibliographystyle{unsrtnat}
\bibliography{references}
\endgroup

\clearpage
\appendix
\linespread{0.98}\selectfont 
\section{Properties of the Primal and Dual Chains}\label{app:chain}
\subsection{The nonconvex primal chain}
The scalar potential used in \eqref{eq:basechain} is
\begin{equation}\label{eq:gamma}
 \Gamma(t)=120\int_1^t\frac{u^2(u-1)}{1+u^2}\,du.
\end{equation}
For the function $F_d$ in \eqref{eq:basechain}, the specialization of \citet[Lemmas 2--4]{cdhs} used in this paper is
\begin{equation}\label{eq:base}
 \begin{gathered}
 F_d\ge0,\qquad F_d(\mathbf1)=0,\qquad F_d(0)\le\tfrac12+10d,\\
 x_d=x_{d+1}=0\quad\Longrightarrow\quad\norm{\nabla F_d(x)}>\tfrac14.
 \end{gathered}
\end{equation}
Precisely, \citet[Lemma 3]{cdhs} state that, for $T\ge1$, $r\ge1$, and $0<\nu\le1$, their function $\bar f_{T,\nu,r}$ in Equation~(9) satisfies $\norm{\nabla\bar f_{T,\nu,r}(x)}>\nu^{3/4}/4$ whenever $x_T=x_{T+1}=0$. We have $F_d=\bar f_{d,1,1}$ and $\Gamma=\Upsilon_1$. Here $\nu$ denotes the parameter called $\mu$ in that paper, not our dual strong-concavity parameter. The hypothesis is exactly the two zero coordinates used in \eqref{eq:base}; no bound on earlier coordinates is required. The last implication is the only imported non-elementary chain estimate.

The remaining elementary properties needed below follow from
\[
 \Gamma'(t)=120\frac{t^2(t-1)}{1+t^2},\qquad
 \Gamma''(t)=120\left[1-\frac{1-t^2+2t}{(1+t^2)^2}\right].
\]
Thus $\Gamma'(0)=\Gamma'(1)=0$. The sign of $\Gamma'$ shows that $\Gamma(t)\ge\Gamma(1)=0$, and
\[
 \Gamma(0)=120\int_0^1\frac{u^2(1-u)}{1+u^2}\,du
 \le120\int_0^1u^2(1-u)\,du=10.
\]
Both $(1-t^2)/(1+t^2)^2$ and $2t/(1+t^2)^2$ have absolute value at most one. Hence $|\Gamma''(t)|\le360$ globally. We use this convenient bound without optimizing it.

\subsection{Eliminating a dual block}\label{app:eliminate}
The free-endpoint Laplacian $P_m$ is the dual-path structure used by \citet{li}. Define
\begin{equation}\label{eq:endpoints}
 a=(H^{-1})_{11}=(H^{-1})_{mm},\qquad
 b=(H^{-1})_{1m}>0,\qquad
 \rho=a/b,\qquad c=\sqrt{h/b}.
\end{equation}
Lemma~\ref{lem:endpoint} bounds $h(\rho-1)$ and $h/b$ uniformly in $n$ and $\kappa$. Maximizing one block gives
\begin{equation}\label{eq:bridge}
 \begin{aligned}
 \max_y\left\{c(uy_1-vy_m)-\tfrac12y^\top Hy\right\}
 &=\frac{c^2}{2}\bigl(a(u^2+v^2)-2buv\bigr)\\
 &=\frac h2(u-v)^2+\frac{h(\rho-1)}2(u^2+v^2).
 \end{aligned}
\end{equation}
The additional diagonal term is separable. We cancel it by setting
\begin{equation}\label{eq:g}
 g(x)=h\left[\frac12(x_1-1)^2+\sum_{j=1}^{d}\Gamma(x_j)\right]
 -\frac{h(\rho-1)}2\sum_{j=1}^{d}(x_j^2+x_{j+1}^2).
\end{equation}
Let $B$ be the linear map satisfying
\begin{equation}\label{eq:B}
 \langle Bx,y\rangle
 =c\sum_{j=1}^{d}\bigl(x_jy^{(j)}_1-x_{j+1}y^{(j)}_m\bigr).
\end{equation}
Because $m\ge2$, each primal variable appears at at most two distinct endpoints and $\norm B\le\sqrt2c$.

\subsection{Endpoint estimates for the dual path}
\begin{lemma}\label{lem:endpoint}
For the parameters in \eqref{eq:parameters} and endpoints in \eqref{eq:endpoints}, with $n\ge2$, $\kappa\ge2$, and $\vartheta:=\min\{1,\kappa/\sqrt n\}$, we have $\rho\ge1$ and
\begin{equation}\label{eq:endpointbounds}
 \vartheta(\rho-1)\le16,\qquad \vartheta b^{-1}\le9L,\qquad
 m-1\ge\frac1{12}\sqrt{\frac\kappa{\sqrt n}}.
\end{equation}
\end{lemma}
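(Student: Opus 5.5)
The plan is to diagonalize the free-endpoint path Laplacian and read off $a$ and $b$ from its spectral expansion. Then I will split into the forced case $m=2$, handled exactly, and the case $m\ge3$, where $m^2\le w/\mu$. The eigenpairs of $P_m$ are
\[
 \lambda_k=4\sin^2\!\bigl(\tfrac{\pi k}{2m}\bigr),\qquad
 \phi_k(i)=\alpha_k\cos\!\bigl(\tfrac{\pi k(i-1/2)}{m}\bigr),\qquad k=0,\ldots,m-1,
\]
with $\alpha_0^2=1/m$ and $\alpha_k^2=2/m$ for $k\ge1$. Since $\phi_k(m)=(-1)^k\phi_k(1)$ and $H=\mu I+wP_m$, we get
\[
 a+b=\sum_{k\ \mathrm{even}}\frac{2\phi_k(1)^2}{\mu+w\lambda_k},\qquad
 a-b=\sum_{k\ \mathrm{odd}}\frac{2\phi_k(1)^2}{\mu+w\lambda_k}\ge0.
\]
The second identity gives $a\ge b$. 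It remains to show $b>0$, which follows from the lower bound on $b$ below; then $\rho=a/b\ge1$.

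\emph{Case $m\ge3$.} Here $s:=\sqrt{w/\mu}\ge3$ and $m=\lfloor s\rfloor\le s$.
\begin{itemize}
\item Keeping only the $k=0$ term gives $a+b\ge 2/(m\mu)$.
\item For odd $k$, use $\phi_k(1)^2\le 2/m$ and $\sin x\ge 2x/\pi$ on $[0,\pi/2]$, so $\lambda_k\ge4k^2/m^2$. Summing over odd $k$ gives
\[
 a-b\le\frac{m}{w}\sum_{k\ \mathrm{odd}}k^{-2}=\frac{\pi^2}{8}\frac{m}{w}\le\frac{\pi^2}{8}\frac1{m\mu},
\]
where the last step uses $m^2\le w/\mu$.
\item Hence $b=\tfrac12[(a+b)-(a-b)]\ge c'/(m\mu)$ with $c'=\tfrac12(2-\pi^2/8)>0$.
\end{itemize}
This lower bound makes $\rho-1=(a-b)/b$ a universal constant below $16$. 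It also gives $b^{-1}\le m\mu/c'$. To bound $\vartheta m\mu$, use $m\le s$, $\vartheta\le1$ and $\vartheta\le\kappa/\sqrt n$:
\[
 (\vartheta m\mu)^2\le\vartheta\cdot\frac{\kappa}{\sqrt n}\cdot w\mu=\vartheta\frac{L^2}{16n}\le\frac{L^2}{32}.
\]
So $\vartheta b^{-1}\le L/(c'\sqrt{32})$, which is well below $9L$.

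\emph{Case $m=2$.} Here $H$ is $2\times2$, and explicitly
\[
 a=\frac{\mu+w}{\mu(\mu+2w)},\qquad b=\frac{w}{\mu(\mu+2w)}.
\]
Thus $\rho-1=\mu/w$ and $b^{-1}=\mu(\mu+2w)/w$. Since $\vartheta\le\kappa/\sqrt n$ and $w=L/(16\sqrt n)$,
\[
 \vartheta(\rho-1)\le\frac{\kappa}{\sqrt n}\cdot\frac{16\sqrt n\,\mu}{L}=16 .
\]
This is exactly the constant in \eqref{eq:endpointbounds}. For $b^{-1}$, write $\vartheta b^{-1}=\vartheta(\rho-1)\mu+2\vartheta\mu$. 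Then use $\vartheta(\rho-1)\le16$ (or directly $\vartheta\le1$), together with $\mu\le L/\kappa$ and $2\vartheta\mu\le 2L/\sqrt n$.

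The main obstacle is the constant in $\vartheta b^{-1}\le9L$ in this subcase. The crude bound $16\mu\le8L$ (from $\kappa\ge2$) plus $2L/\sqrt n$ is borderline. I would first sharpen it by noting that $m=2$ is forced only when $s<3$, i.e.\ $\mu>w/9$, and then split on whether $\kappa\le\sqrt n$. When $\kappa\le\sqrt n$, $\vartheta\mu^2/w=16\mu$ must be compared against $L$ using $\mu\le L/2$ together with $\mu\ge w/9$. When $\kappa\ge\sqrt n$, $\vartheta=1$ and $\mu^2/w\le 16\sqrt n\,\mu/\kappa\cdot\mu/L\le 16\mu/\sqrt n\cdot(\sqrt n/\kappa)\le 8L/\sqrt n$. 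If the constant still fails, I would fall back to reporting a slightly larger universal constant, since only its universality is used later.

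\emph{The bound on $m-1$.} If $s\ge3$, then $m-1\ge s-2\ge s/3$. If $s<3$, then $m-1=1>s/3$. In both cases
\[
 m-1\ge\frac s3=\frac13\sqrt{\frac{\kappa}{16\sqrt n}}=\frac1{12}\sqrt{\frac\kappa{\sqrt n}} .
\]
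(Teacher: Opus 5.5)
Your treatment of $m\ge3$ is correct and takes a different route from the paper's. The paper solves the recurrence for $u=H^{-1}e_m$ in closed form, $u_j/u_1=\cosh((j-\tfrac12)t)/\cosh(t/2)$ with $\cosh t=1+\mu/(2w)$. It then uses $\mathbf 1^\top H=\mu\mathbf 1^\top$, i.e.\ $\sum_j u_j=1/\mu$, to get $\rho\le\cosh 1<2$ and $b^{-1}\le 2m\mu\le2\sqrt{\mu w}$. Your cosine-mode expansion, splitting $a\pm b$ into even and odd modes, gives $\rho\ge1$ for every $m$ at once and needs only $\sin x\ge2x/\pi$. Its constants ($\rho-1\le3.3$, $b^{-1}\le m\mu/c'$) are weaker but amply sufficient. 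Your $m=2$ formulas, the bound $\vartheta(\rho-1)\le16$, and the bound on $m-1$ coincide with the paper's.

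The gap is the $m=2$ estimate $\vartheta b^{-1}\le9L$, which you leave open and propose to weaken. It is not actually an obstacle. With $r=\mu/w$, write $\vartheta b^{-1}=\mu(\vartheta r+2\vartheta)$ and bound the second term using $\vartheta\le1$ rather than $\vartheta\le\kappa/\sqrt n$. Then $\vartheta b^{-1}\le\mu(16+2)=18\mu\le9L$, since $\kappa\ge2$ gives $\mu\le L/2$; this is exactly the paper's line. Your estimate $2\vartheta\mu\le2L/\sqrt n$ exceeds $L$ precisely for $n\in\{2,3\}$, which is why $8L+2L/\sqrt n$ looked borderline.

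Drop the proposed sharpening. Besides being unnecessary, its chain for $\kappa\ge\sqrt n$ is garbled, and its conclusion $\mu^2/w\le8L/\sqrt n$ is false. At $\kappa=\sqrt n$ one has $m=2$ and $\mu^2/w=16L/\sqrt n$; the correct bound there is $\mu^2/w=\mu(\rho-1)\le16\mu\le8L$. The fallback to an unspecified larger constant is also not free. The constant enters $\norm B\le\sqrt{2h/b}<L/10$ in \eqref{eq:gBbounds}, which requires $\vartheta b^{-1}<10L$, so there is little room.
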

\begin{proof}
Set $r=\mu/w$ and $t=\operatorname{arcosh}(1+r/2)$. If $u=H^{-1}e_m$, the first boundary equation and the interior recurrence are
\[
 u_2=(1+r)u_1,\qquad u_{j+1}=(2+r)u_j-u_{j-1}\quad(2\le j<m).
\]
Consequently,
\begin{equation}\label{eq:cosh}
 \frac{u_j}{u_1}=
 \frac{\cosh((j-\tfrac12)t)}{\cosh(t/2)},\qquad
 \rho=\frac{\cosh((m-\tfrac12)t)}{\cosh(t/2)}.
\end{equation}
Since $\mathbf1^\top H=\mu\mathbf1^\top$, we have $\sum_j u_j=1/\mu$. Formula~\eqref{eq:cosh} then gives $b=u_1>0$ and $b\le u_j\le a$.

If $m\ge3$, then $m\le\sqrt{w/\mu}$. The inequality $t\le\sqrt r$ follows from $\cosh(\sqrt r)\ge1+r/2$. Hence $\rho\le\cosh(1)<2$, and
\[
 \frac1\mu=\sum_j u_j\le2mb,
 \qquad b^{-1}\le2\mu m\le2\sqrt{\mu w}\le2L.
\]
If $m=2$, direct inversion gives
\[
 \rho=1+r,\qquad b^{-1}=\mu(r+2).
\]
Here $\vartheta r\le16$ and $\mu\le L/2$, so $\vartheta(\rho-1)\le16$ and $\vartheta b^{-1}=\mu(\vartheta r+2\vartheta)\le9L$. For $m\ge3$, the already proved bounds $\rho<2$, $b^{-1}\le2L$, and $\vartheta\le1$ imply the same estimates.

For any $q>0$, $\max\{2,\lfloor q\rfloor\}-1\ge q/3$. Indeed, when $q<3$ the left side is at least one; when $q\ge3$ it is at least $q-2\ge q/3$. Taking $q=\sqrt{w/\mu}=\tfrac14\sqrt{\kappa/\sqrt n}$ proves the last bound.
\end{proof}

\section{Smoothness and Strong Concavity}\label{app:smooth}
We verify the function-class conditions globally. The matrices $P_i$ in \eqref{eq:instance} sum disjoint edge outer products within each component. With $P=\sum_iP_i=I_d\otimes P_m$, this gives
\begin{equation}\label{eq:matching}
 P_i^2=2P_i,\qquad \norm{P_i}\le2,\qquad \norm P\le4.
\end{equation}
In particular, every $K_i\succeq\mu I$, which proves componentwise strong concavity.

From separability of $g$ and Lemma~\ref{lem:endpoint},
\begin{equation}\label{eq:gBbounds}
 \begin{aligned}
 \Lip(\nabla g)&\le\frac{L}{2000}\{361\vartheta+2\vartheta(\rho-1)\}
 \le\frac{393L}{2000}<\frac L5,\\
 \norm B&\le\sqrt{2h/b}\le\sqrt{\frac{18}{2000}}L<\frac L{10}.
 \end{aligned}
\end{equation}
Expanding $K_i^2$ and using $P_i^2=2P_i$ gives
\begin{equation}\label{eq:secondmoment}
 \begin{aligned}
 \frac1n\sum_iK_i^2
 &=\mu^2I+2\mu w\sum_iP_i+nw^2\sum_iP_i^2\\
 &=\mu^2I+(2\mu w+2nw^2)P
 \preceq(\mu^2+8\mu w+8nw^2)I.
 \end{aligned}
\end{equation}
For $n\ge2$, $\mu\le L/2$, and $w=L/(16\sqrt n)$,
\begin{equation}\label{eq:dualms}
 \mu^2+8\mu w+8nw^2
 \le L^2\left(\frac14+\frac1{4\sqrt2}+\frac1{32}\right)
 <\frac{49L^2}{100}.
\end{equation}
Let $z-z'=(u,v)$. Apply the triangle inequality to the stacked component gradients, with norm $(n^{-1}\sum_i\norm{\cdot}_i^2)^{1/2}$:
\begin{align*}
 &\left(\frac1n\sum_i\norm{\nabla f_i(z)-\nabla f_i(z')}^2\right)^{1/2}\\
 &\quad\le\norm{\nabla g(x)-\nabla g(x')}
       +\norm{(B^\top v,Bu)}
       +\left(\frac1n\sum_i\norm{K_i v}^2\right)^{1/2}\\
 &\quad\le\left(\Lip(\nabla g)+\norm B+\frac{7L}{10}\right)\norm{z-z'}
       \le L\norm{z-z'}.
\end{align*}
This proves Assumption~\ref{ass:as}, including all coupling cross terms. Finally, averaging the component Hessians gives $K=I_d\otimes H$. Maximizing each block and applying \eqref{eq:bridge} proves \eqref{eq:primal}. Since $B0=0$, the dual maximizer at the origin is zero.

\subsection{Effect of the smoothness assumption}
The distinction between averaged and individual smoothness enters through the permissible dual edge weight $w$. The matching identity in \eqref{eq:matching} gives
\begin{equation}\label{eq:twobudgets}
 \left\|\frac1n\sum_iK_i^2\right\|
 \le\mu^2+8\mu w+8nw^2,
 \qquad
 \max_i\norm{K_i}\le\mu+2nw.
\end{equation}
Thus averaged smoothness permits $w$ of order $L/\sqrt n$. If $\kappa\ge\sqrt n$, the primal amplitude $h$ is of order $L$, giving the matching rate. Below this threshold, $m=2$ and $\rho-1=\mu/w=16\sqrt n/\kappa$. Keeping $h$ of order $L$ is not uniformly feasible as $\kappa/\sqrt n$ decreases, because of the diagonal correction in \eqref{eq:g}. Choosing $h$ of order $L\kappa/\sqrt n$ preserves smoothness and gives the coefficient $\sqrt n\,\kappa$ in \eqref{eq:product}.

For individual smoothness, take $w=L/(16n)$ and $h=(L/2000)\min\{1,\kappa/n\}$. When $\kappa\ge n$, the amplitude is constant and $m-1\ge\tfrac1{12}\sqrt{\kappa/n}$. When $2\le\kappa<n$, a single internal edge and a reduced amplitude give the coefficient $\kappa$. Appendix~\ref{app:is} verifies both branches.

\section{Adaptive Component Search}\label{app:search}
\subsection{Support and information revealed by a query}
Let the exposed set be the union of the supports of all preceding oracle responses, in the joint ordering of Section~\ref{sec:construction}. At $(0,0)$, the only nonzero gradient entry is $-h$ at $x_1$. The function $g$ is separable and has zero derivative at every other zero coordinate. The maps $B$ and $K_i$ connect only adjacent coordinates in this ordering. Induction therefore shows that the exposed set is a prefix and each query can extend it by at most one coordinate.

Suppose the next edge lies inside block $j$, between coordinates $k$ and $k+1$. The next coordinate is exposed only by a query to $i=\sigma_{j,k}$ with $y^{(j)}_k\ne0$. The edge's diagonal contribution at coordinate $k$ may also reveal whether $i$ is its owner, but this is the same membership test. Every later edge has both input coordinates zero, so its contribution to the response vanishes. In particular, the query revealing coordinate $k+1$ contains no information about the owner of edge $k+1$: that edge's input coordinates were both zero when the query was made. Primal gradient responses do not depend on the coloring at all.

Owners of earlier edges can be granted to the algorithm without weakening the lower bound. Conditional on those owners and the transcript before the first test at the new frontier, the current owner is uniform on $[n]$ for the first edge of a block, or on $[n]\setminus\{\sigma_{j,k-1}\}$ thereafter. Failed tests at this frontier subsequently remove candidates. They do not change the conditional generation rule for any edge beyond it once its predecessor has been determined.

\begin{lemma}[Adaptive search cost]\label{lem:search}
Before exposing $x_d$, an algorithm satisfying Definition~\ref{def:zr} must traverse $J=(d-1)(m-1)$ internal dual edges. The number of required IFO calls stochastically dominates $S=\sum_{q=1}^{J}R_q$, where the $R_q$ are independent and uniform on $\{1,\ldots,n\}$ at the first edge of each block, or on $\{1,\ldots,n-1\}$ at a subsequent edge. Consequently,
\begin{equation}\label{eq:searchmoments}
 \E S\ge\frac{nJ}{2},\qquad
 \E S^2\le\frac43(\E S)^2,\qquad
 \Prob\{S>nJ/8\}\ge\frac{27}{64}.
\end{equation}
\end{lemma}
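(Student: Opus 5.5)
The proof has two parts: a coupling argument that reduces the query count to the sum $S$, and elementary moment bounds on $S$.

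\emph{Coupling.} By the support discussion preceding the lemma, the exposed set is a prefix of the joint ordering. Each query extends it by at most one coordinate. Reaching $x_d$ requires crossing the $m-1$ internal edges of each of the blocks $1,\dots,d-1$, so $J=(d-1)(m-1)$ edges in total. Fix the frontier edge $q$ and grant the algorithm all earlier owners. Conditionally on the transcript up to the first test at this frontier, the owner is uniform on a candidate set of size $N_q\in\{n,n-1\}$. Each query tests at most one candidate, and a failed test only removes that candidate. Querying an index already excluded, or querying with $y^{(j)}_k=0$, wastes the call. Hence the number of calls spent at frontier $q$ dominates the position of the owner in a uniformly random ordering of the candidates. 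This is a standard sampling-without-replacement argument: under any adaptive test order, the hit time is uniform on $\{1,\dots,N_q\}$. As argued before the lemma, the query that reveals edge $q$ carries no information about edge $q+1$. The owner of edge $q+1$ is then drawn uniformly from $[n]\setminus\{\sigma_{j,k}\}$, independently of everything else given its predecessor. So the hit times $R_q$ can be realized as independent uniforms, and a sequential coupling gives stochastic dominance by $S=\sum_q R_q$. The main obstacle is this step: one must confirm that adaptivity, including queries at arbitrary points and indices, never lets a single call test two candidates or leak information about future owners. I would formalize it by induction on the frontier, with a filtration that includes the granted owners.

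\emph{Moments.} For $R$ uniform on $\{1,\dots,N\}$ we have $\E R=(N+1)/2$. Since $n-1\ge n/2$ for $n\ge2$, this gives $\E R\ge n/2$, and summing yields $\E S\ge nJ/2$. For the second moment, use $\E R^2=(N+1)(2N+1)/6\le\tfrac43(\E R)^2$, which is equivalent to $2N+1\le 2(N+1)$. Independence then gives
\[
\E S^2=\sum_q\E R_q^2+\sum_{q\ne q'}\E R_q\E R_{q'}\le\tfrac43\sum_q(\E R_q)^2+\sum_{q\ne q'}\E R_q\E R_{q'}\le\tfrac43(\E S)^2 .
\]
Finally, apply Paley--Zygmund with $\theta=1/4$, noting $nJ/8\le\tfrac14\E S$:
\[
\Prob\{S>nJ/8\}\ge\Prob\{S>\tfrac14\E S\}\ge(1-\tfrac14)^2\frac{(\E S)^2}{\E S^2}\ge\frac{9}{16}\cdot\frac34=\frac{27}{64}.
\]
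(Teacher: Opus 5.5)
Your proposal is correct and follows the paper's proof essentially step for step. Both use a filtration that includes the granted earlier owners, take the rank of the owner among distinct informative tests as a conditionally uniform variable, obtain independence by iterated conditioning, and finish with Paley--Zygmund at parameter $1/4$. The differences are cosmetic. You bound $\E S^2$ through the per-summand ratio $\E R_q^2\le\frac43(\E R_q)^2$, whereas the paper bounds $\operatorname{Var}(S)\le Jn^2/12\le(\E S)^2/(3J)$. Also, your bound $\E R_q\ge n/2$ follows from $N_q+1\ge n$, not from $n-1\ge n/2$ as you wrote.
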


\subsection{Proof of Lemma~\ref{lem:search}}
\begin{proof}
Give previously traversed owners to the algorithm. Let $\mathcal F_t$ be the filtration generated by its private random seed, the first $t$ queries and responses, and these revealed owners. Enumerate the $J$ internal edges in their path order. For edge $q$, let $\tau_q$ be the first response count at which its upstream endpoint is exposed, and let $\eta_q$ be the first at which its downstream endpoint is exposed. These are stopping times, with $\eta_q\le\tau_{q+1}$. Complete a stopped algorithm by an admissible search through the remaining candidates; if a frontier is never reached, its infinite hitting time already dominates the claimed bound.

On reaching frontier $q$, its owner is uniform conditional on $\mathcal F_{\tau_q}$ among $N_q=n$ candidates at the start of a block and $N_q=n-1$ otherwise. A failed informative query excludes exactly its tested label; every untested owner gives the same response. Let $R_q$ be the rank of the successful label among distinct informative candidate tests. Adaptive selection therefore gives
\[
 \Prob(R_q=r\mid\mathcal F_{\tau_q})=\frac1{N_q},
 \qquad r=1,\ldots,N_q.
\]
The earlier ranks are $\mathcal F_{\tau_q}$-measurable, and $N_q$ is deterministic, so iterated conditioning proves that the $R_q$ are independent. Repeated or uninformative calls only increase $\eta_q-\tau_q$, giving $\eta_q-\tau_q\ge R_q$ pathwise on this coupling. The intervals are disjoint, hence exposing $x_d$ requires at least $\sum_qR_q$ calls. For unvisited frontiers, extend the coupling with independent fresh ranks. Inferring the final candidate still requires a successful oracle call to expose the next coordinate; this also covers $n=2$.

For a variable uniform on $\{1,\ldots,N\}$, the mean is $(N+1)/2$ and the variance is $(N^2-1)/12$. Thus
\[
 \E S\ge nJ/2,\qquad
 \operatorname{Var}(S)\le Jn^2/12\le(\E S)^2/(3J)\le(\E S)^2/3.
\]
Paley--Zygmund with parameter $1/4$ gives
\[
 \Prob\{S>\E S/4\}
 \ge\frac{(3/4)^2(\E S)^2}{\E S^2}\ge\frac{27}{64}.
\]
Since $\E S/4\ge nJ/8$, this proves \eqref{eq:searchmoments}.
\end{proof}

The proof does not assume that a dual block is solved to any prescribed numerical accuracy. It only counts the oracle responses needed to expose its opposite endpoint. Reusing a solution from an earlier block cannot expose coordinates in a later block, and assigning an arbitrary value to the first endpoint cannot skip the internal support sequence. This distinction is what makes the cost multiply across primal links.

\begin{remark}[Role of the support restriction]\label{rem:support}
The average function in \eqref{eq:primal} is independent of the edge assignment. Randomness changes only the component decomposition. Definition~\ref{def:zr} prevents nonzero outputs in unexposed coordinates, even if the family is known. Unrestricted randomized algorithms require a separate hiding argument.
\end{remark}

\section{Proof of Theorem~\ref{thm:as}}\label{app:scaling}
\subsection{The dual-path construction for \texorpdfstring{$\kappa\ge2$}{kappa at least 2}}
We first prove the chain bound under the temporary restriction $\eps^2\le c_0L\Delta\vartheta$, where $\vartheta=\min\{1,\kappa/\sqrt n\}$. Appendix~\ref{app:additive} removes the factor $\vartheta$ from the final accuracy range. Use \eqref{eq:parameters} and set
\begin{equation}\label{eq:scaling}
 s=\frac{16\eps}{h},\qquad
 d=\left\lfloor\frac{\Delta}{11hs^2}\right\rfloor,
 \qquad \widetilde f_i(x,y)=s^2f_i(x/s,y/s).
\end{equation}
It suffices to use the universal accuracy constant
\begin{equation}\label{eq:c0}
 c_0=\frac1{45056000}.
\end{equation}
Under $\eps^2\le c_0L\Delta\vartheta$, we have $\Delta/(11hs^2)\ge8$; only a lower bound of four is needed. This constant is not optimized. Scaling preserves all smoothness, strong-concavity, and support properties. The scaled primal objective satisfies
\[
 \widetilde\Phi(x)=hs^2F_d(x/s),\qquad
 \widetilde\Phi(0)-\inf\widetilde\Phi
 \le hs^2(\tfrac12+10d)\le11hs^2d\le\Delta.
\]
Exact dual initialization is also preserved.

If $x_d$ has not been exposed, an admissible output has $x_d=x_{d+1}=0$. Equation~\eqref{eq:base} gives
\begin{equation}\label{eq:unexposedgrad}
 \norm{\nabla\widetilde\Phi(x)}>\frac{hs}{4}=4\eps.
\end{equation}
For any query budget $Q\le nJ/8$, Lemma~\ref{lem:search} therefore implies
\[
 \E_{\sigma,\mathsf A}\norm{\nabla\widetilde\Phi(\hat x)}^2
 >16\eps^2\frac{27}{64}>\eps^2,
\]
where the expectation includes the random coloring and the algorithm's private randomness. Hence some fixed coloring violates the expected squared-gradient guarantee. The same argument gives expected gradient norm greater than $4\eps(27/64)>\eps$ and failure probability at least $27/64>1/3$.

Writing $D=\Delta/(11hs^2)\ge4$, we have $d-1\ge D/2$. When $\kappa\ge\sqrt n$, $\vartheta=1$ and Lemma~\ref{lem:endpoint} gives $n(m-1)\ge n^{3/4}\sqrt\kappa/12$. When $2\le\kappa<\sqrt n$, we have $m=2$ and $n\vartheta=\sqrt n\,\kappa$. Consequently,
\begin{align*}
 nJ&=n(d-1)(m-1)
 =\Omega\!\left(n\vartheta(m-1)L\Delta\eps^{-2}\right)\\
 &=\Omega\!\left(\min\{\sqrt n\,\kappa,n^{3/4}\sqrt\kappa\}\,L\Delta\eps^{-2}\right),
\end{align*}
which proves the fixed-budget assertion.

For the expected-query assertion, suppose an algorithm has stopping time $\tau$ with $\E_{\mathsf A}\tau\le nJ/64$ for every coloring. Averaging over colorings and applying Markov's inequality gives $\Prob\{\tau>nJ/8\}\le1/8$. Under the coupling of Lemma~\ref{lem:search},
\[
 \Prob\{\text{$x_d$ is unexposed at time $\tau$}\}
 \ge\frac{27}{64}-\frac18=\frac{19}{64}.
\]
Equation~\eqref{eq:unexposedgrad} then contradicts the expected squared-gradient guarantee on average over colorings. The expected gradient norm is likewise greater than $4\eps(19/64)>\eps$. Some fixed instance must fail, proving the same asymptotic lower bound for worst-case expected query complexity.

\subsection{The remaining range \texorpdfstring{$1\le\kappa<2$}{1 at most kappa below 2}}\label{app:smallkappa}
In this range we use a colored primal chain, with an independent quadratic dual variable. Let $h_0=L/(2000\sqrt n)$ and color the edges $(x_j,x_{j+1})$ of \eqref{eq:basechain}: the first owner is uniform on $[n]$, and each subsequent owner is uniform among the labels other than its predecessor. Define
\begin{equation}\label{eq:smallchain}
 f_i(x,y)=h_0\left[\frac12(x_1-1)^2+\sum_{j=1}^d\Gamma(x_j)\right]
 +\frac{nh_0}{2}\sum_{j:\,\sigma_j=i}(x_j-x_{j+1})^2
 -\frac\mu2y^2.
\end{equation}
The average primal function is $h_0F_d$, and $y^*(0)=0$. For the primal edge matrices $P_i$, the matching identities give $n^{-1}\sum_i(nh_0P_i)^2=2nh_0^2P\preceq8nh_0^2I$. The common separable primal term has gradient Lipschitz constant at most $361h_0$. Thus its primal component gradients satisfy averaged smoothness with constant
\[
 361h_0+\sqrt{8n}\,h_0
 =\frac{L}{2000}\left(\frac{361}{\sqrt n}+\sqrt8\right)<L.
\]
Since the dual coordinate is decoupled, the squared joint increment is the sum of the primal squared increment and $\mu^2|y-y'|^2$. The joint averaged-smoothness constant is therefore at most $L$, even at $\mu=L$.

Only $x_1$ is exposed initially. Every subsequent primal coordinate requires finding the current edge owner. The proof of Lemma~\ref{lem:search} applies with $J=d-1$: no later edge contributes until an incident coordinate is exposed. Use \eqref{eq:scaling} with $h=h_0$. Because $\vartheta\le2/\sqrt n$, the temporary restriction $\eps^2\le c_0L\Delta\vartheta$ and \eqref{eq:c0} ensure $D=\Delta h_0/(2816\eps^2)\ge4$. The same gradient barrier and fixed-budget and stopping-time arguments yield
\[
 \Omega\!\left(nh_0\Delta\eps^{-2}\right)
 =\Omega\!\left(\sqrt n\,L\Delta\eps^{-2}\right).
\]
For $1\le\kappa<2$, the coefficient $\min\{\sqrt n\,\kappa,n^{3/4}\sqrt\kappa\}$ is at most $2\sqrt n$, so this proves the accuracy-dependent term of Theorem~\ref{thm:as} in the remaining range.

\subsection{The additive component-search term}\label{app:additive}
For every $\kappa\ge1$ and $\eps^2\le L\Delta/8$, there is a separate $\Omega(n)$ lower bound, even under individual smoothness. Choose $i_*\in[n]$ uniformly, put $a=\sqrt{2L\Delta}$, and define the scalar instance
\[
 f_i(x,y)=\frac L2x^2-na\,\mathbf1_{\{i=i_*\}}x-\frac\mu2y^2.
\]
Every component is jointly $L$-smooth and $\mu$-strongly concave in $y$, and the instance belongs to $\Bis$. Its primal gap at the origin is exactly $\Delta$. Until component $i_*$ is queried, every response at the origin is zero, so a zero-respecting algorithm must keep both its queries and its output at the origin. There the primal gradient has magnitude $a\ge4\eps$.

With at most $n/4$ queries, the hidden component is missed with probability at least $3/4$, even under adaptive sampling without replacement. If instead the expected number of queries is at most $n/32$ for every hidden index, Markov's inequality bounds the probability of exceeding $n/4$ queries by $1/8$. The probability of stopping before discovery is then at least $5/8$. Both events contradict the expected squared-gradient or expected-norm guarantee; the fixed-budget statement also contradicts success probability $2/3$.

When $\eps^2\le c_0L\Delta\vartheta$, the worst case over the union of this quadratic family and the appropriate chain family is at least the maximum of their lower bounds. The inequality $\max\{u,v\}\ge(u+v)/2$ gives the additive form in \eqref{eq:mainlower}, without a direct-sum argument.

It remains to cover $c_0L\Delta\vartheta<\eps^2\le c_0L\Delta$. This interval is nonempty only if $\kappa<\sqrt n$, in which case $\vartheta=\kappa/\sqrt n$ and
\[
 \min\{\sqrt n\,\kappa,n^{3/4}\sqrt\kappa\}\,L\Delta\eps^{-2}
 =\sqrt n\,\kappa L\Delta\eps^{-2}<\frac{n}{c_0}.
\]
Thus the desired expression is at most $(1+c_0^{-1})n$. The quadratic family gives its lower bound after reducing the universal constant $c_1$. Since $c_0<1/8$, its accuracy requirement is satisfied. The argument applies equally to fixed budgets, expected query counts, expected gradient norm, and the stated success probability. This proves Theorem~\ref{thm:as} on the full range $0<\eps^2\le c_0L\Delta$.\hfill$\square$

\section{Proof of Theorem~\ref{thm:is}}\label{app:is}
For $\kappa\ge2$, use the construction of Section~\ref{sec:construction} with
\begin{equation}\label{eq:isparameters}
 w=\frac{L}{16n},\qquad
 m=\max\{2,\lfloor\sqrt{w/\mu}\rfloor\},\qquad
 h=\frac{L}{2000}\min\{1,\kappa/n\}.
\end{equation}
We first verify its two regimes, then handle $1\le\kappa<2$ with a decoupled instance. All three cases use the accuracy constant in \eqref{eq:c0}.

\subsection{The range \texorpdfstring{$\kappa\ge n$}{kappa at least n}}
Here $h=L/2000$. For $m\ge3$, the endpoint proof in Appendix~\ref{app:chain} is unchanged. For $m=2$, $r=\mu/w=16n/\kappa\le16$, so
\[
 \rho=1+r\le17,\qquad b^{-1}=\mu(r+2)\le9L.
\]
Consequently,
\[
 \Lip(\nabla g)\le393h<L/5,\qquad
 \norm B\le\sqrt{18hL}<L/10,\qquad
 \norm{K_i}\le\mu+2nw\le5L/8.
\]
For each component, the triangle inequality for the joint gradient gives smoothness at most $L/5+L/10+5L/8<L$. Each $h_i(y)=\tfrac12y^\top K_i y$ is also individually $L$-smooth and $\mu$-strongly convex, so the instance belongs to \eqref{eq:bilinearclass}.

The primal identity, coloring law, and search-rank argument are unchanged. The chain length now obeys
\[
 m-1\ge\frac13\sqrt{w/\mu}=\frac1{12}\sqrt{\kappa/n}.
\]
Apply the same scaling \eqref{eq:scaling} and accuracy constant \eqref{eq:c0}. We obtain
\[
 nJ=\Omega\!\left(\sqrt{n\kappa}\,L\Delta\eps^{-2}\right).
\]
The fixed-budget and stopping-time arguments in Appendix~\ref{app:scaling} apply unchanged. Since $\sqrt{n\kappa}\ge n$ and $L\Delta\eps^{-2}\ge1$ in this regime, the same bound includes the additive $n$ term.

\subsection{The range \texorpdfstring{$2\le\kappa<n$}{2 at most kappa below n}}
Now $m=2$ and $h=L\kappa/(2000n)$. Direct inversion gives $\rho=1+r$ and $b^{-1}=\mu(r+2)$, where $r=16n/\kappa$. Consequently,
\[
 \begin{gathered}
 h(\rho-1)=16L/2000,\qquad
 \norm{K_i}\le\mu+2nw\le5L/8,\\
 \Lip(\nabla g)\le h\{361+2(\rho-1)\}
 \le393L/2000<L/5,\\
 \norm B^2\le2h/b
 =\frac{L^2}{1000}\left(\frac{16}{\kappa}+\frac2n\right)
 \le\frac{9L^2}{1000}<\frac{L^2}{100}.
 \end{gathered}
\]
As above, every component is jointly $L$-smooth, and its quadratic dual term is $\mu$-strongly convex and $L$-smooth. Thus the instance belongs to $\Bis$. The primal identity and coloring law remain unchanged, with one internal dual edge per link.

Under the temporary restriction $\eps^2\le c_0L\Delta\kappa/n$, use \eqref{eq:scaling}. Then $D=h\Delta/(2816\eps^2)\ge8$, $d-1\ge D/2$, and $J=d-1$. Hence
\[
 nJ\ge nD/2
 =\Omega(nh\Delta\eps^{-2})
 =\Omega(\kappa L\Delta\eps^{-2}).
\]
Lemma~\ref{lem:search} and the gradient barrier \eqref{eq:unexposedgrad} yield all the fixed-budget and expected-query conclusions of Theorem~\ref{thm:as}. Combining this family with the hidden-component quadratic in Appendix~\ref{app:additive} gives the additive $n$ term.

For $c_0L\Delta\kappa/n<\eps^2\le c_0L\Delta$, we instead have $\kappa L\Delta\eps^{-2}<n/c_0$. The same hidden-component quadratic is in $\Bis$ and gives $\Omega(n+\kappa L\Delta\eps^{-2})$ after reducing the universal constant. This completes the proof for $2\le\kappa<n$.

\subsection{The range \texorpdfstring{$1\le\kappa<2$}{1 at most kappa below 2}}\label{app:issmallkappa}
The cutoff $\kappa\ge2$ above ensures $\mu\le L/2$, leaving smoothness for the coupled dual path. It is not a restriction on the theorem. For $1\le\kappa<2$, take $h_0=L/2000$ and identical components
\begin{equation}\label{eq:isdecoupledchain}
 f_i(x,y)=h_0s^2F_d(x/s)-\frac\mu2\norm y^2,
 \qquad
 s=\frac{16\eps}{h_0},\qquad
 d=\left\lfloor\frac{h_0\Delta}{2816\eps^2}\right\rfloor.
\end{equation}
The bound $\Lip(\nabla F_d)\le365$ follows from $|\Gamma''|\le360$, the anchor curvature one, and the path-Laplacian norm at most four. Since the primal and dual variables are decoupled, the joint smoothness constant is at most $\max\{365h_0,\mu\}\le L$. Thus \eqref{eq:isdecoupledchain} belongs to $\Bis$, with $A_i=0$, $y^*(0)=0$, and primal gap at most $\Delta$ by \eqref{eq:scaling}.

Under $\eps^2\le c_0L\Delta$, the quantity inside the floor in \eqref{eq:isdecoupledchain} is at least eight. Each query exposes at most one new primal coordinate, regardless of the component index. Until $x_d$ is exposed, \eqref{eq:base} gives $\norm{\nabla\Phi}>h_0s/4=4\eps$. Consequently, a fixed budget of at most $(d-1)/2$ fails both expected-gradient criteria and any positive constant success guarantee. If a stopping time satisfies $\E\tau\le(d-1)/8$, Markov's inequality gives $\Prob\{\tau\le(d-1)/2\}\ge3/4$, which again contradicts either expected-gradient criterion. This proves an $\Omega(L\Delta\eps^{-2})$ lower bound in all the required query conventions.

Combining this family with the individually smooth hidden-component quadratic of Appendix~\ref{app:additive} gives $\Omega(n+L\Delta\eps^{-2})$. Because $n\ge2$ and $1\le\kappa<2$, we have $\min\{\kappa,\sqrt{n\kappa}\}=\kappa<2$. The last bound is therefore the claimed $\Omega(n+\min\{\kappa,\sqrt{n\kappa}\}L\Delta\eps^{-2})$ after adjusting a universal constant. This completes the proof of Theorem~\ref{thm:is} for every $\kappa\ge1$.\hfill$\square$

\subsection{Combined bounds throughout the condition-number range}\label{app:iscomparison}
The next proposition records precisely what the bounds in Table~\ref{tab:comparison-is} imply. Its accuracy constant also accounts for the imported minimization lower bound and need not equal the explicit $c_0$ in \eqref{eq:c0}.

\begin{proposition}[Bilinear comparison]\label{prop:iscomparison}
There is a universal constant $c_{\mathrm{IS}}\in(0,c_0]$ such that, for $n\ge2$, $\kappa\ge1$, and $0<\eps^2\le c_{\mathrm{IS}}L\Delta$,
\begin{equation}\label{eq:iscombined}
 \begin{aligned}
 \Nexp_\eps(\Bis)
 &=\Omega\!\left(n+\max\{\sqrt n,\min\{\kappa,\sqrt{n\kappa}\}\}
 L\Delta\eps^{-2}\right),\\
 \Nexp_\eps(\Bis)
 &=\tO\!\left(n+\min\{\sqrt n\,\kappa,n+\sqrt{n\kappa}\}
 L\Delta\eps^{-2}\right),
 \end{aligned}
\end{equation}
where $\Bis=\Bis(n,L,\mu,\Delta)$. The lower bound also holds for fixed budgets. At $\kappa=1$, both fixed-budget and expected-query complexities are $\Theta(n+\sqrt n\,L\Delta\eps^{-2})$, without logarithmic factors.
\end{proposition}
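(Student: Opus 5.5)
The plan has three parts: assemble the lower bound from Theorem~\ref{thm:is} and the minimization bound of \citet{densehiding}, take the upper bound as the minimum of Theorem~\ref{thm:smallupper} and Proposition~\ref{prop:upper}, and treat $\kappa=1$ separately. Throughout, I take $c_{\mathrm{IS}}=\min\{c_0,c_{\mathrm{dh}},1\}$, where $c_{\mathrm{dh}}$ is the accuracy constant needed by \citet{densehiding}.

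\textbf{Lower bound.} The term $n+\min\{\kappa,\sqrt{n\kappa}\}L\Delta\eps^{-2}$ comes directly from Theorem~\ref{thm:is}, which covers both fixed budgets and expected queries. For the $\sqrt n$ coefficient I embed the individually smooth finite-sum minimization hard instances of \citet{densehiding} as $f_i(x,y)=g_i(x)-\tfrac\mu2\norm y^2$. This embedding satisfies the following:
\begin{itemize}
\item it lies in $\Bis$ with $A_i=0$, because $\mu\le L$ keeps joint smoothness at most $L$;
\item $y^*(0)=0$ and $\Phi=n^{-1}\sum_ig_i$, so criterion \eqref{eq:criterion} is exactly the minimization criterion;
\item the dual queries carry no information, so an IFO algorithm for the minimax instance is an unrestricted randomized IFO algorithm for the minimization instance.
\end{itemize}
The one thing to check is the query convention. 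If \citet{densehiding} state their bound as a fixed budget with constant success probability, or for $\E\norm{\nabla\Phi}$, I convert it to expected squared gradient via Jensen. I then pass to expected stopping times by the Markov argument of Appendix~\ref{app:scaling}, which costs only constants. Once the two lower bounds are in hand, $\max\{u,v\}\ge(u+v)/2$ combines them into the maximum form.

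\textbf{Upper bound.} I take the better of two algorithms. For $\kappa\le\sqrt n$, Theorem~\ref{thm:smallupper} applies because $\Bis\subseteq\Fis\subseteq\Fas$, and RPDA gives $\tO(n+\sqrt n\,\kappa L\Delta\eps^{-2})$. For every $\kappa$, Proposition~\ref{prop:upper} gives $\tO((n+\sqrt{n\kappa})L\Delta\eps^{-2})$, which is the $n+\sqrt{n\kappa}$ branch. For $\kappa\ge\sqrt n$ we have $\sqrt n\kappa\ge n$, so the minimum is at least of order $n+\sqrt{n\kappa}$ up to constants, and Proposition~\ref{prop:upper} alone suffices. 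For $\kappa\le\sqrt n$, RPDA attains the $\sqrt n\kappa$ branch. Using $\eps^2\le L\Delta$ gives $L\Delta\eps^{-2}\ge1$, which lets the additive $n$ in each bound be written uniformly as $n$ plus the stated term. I will also confirm that the proximal reduction is zero-respecting; this follows since it uses only linear-span gradient updates with zero initialization.

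\textbf{The case $\kappa=1$.} Here $\min\{\kappa,\sqrt{n\kappa}\}=1$, so the combined lower bound is $\Omega(n+\sqrt n\,L\Delta\eps^{-2})$ with no logarithms. For the matching upper bound I use $\mu=L$. The dual problem is then perfectly conditioned, and the class reduces to finite-sum minimization of $\Phi$ (Proposition~\ref{prop:boundary}). There I run a SPIDER/SARAH-type method with fixed budget $O(n+\sqrt n\,L_\Phi\Delta\eps^{-2})$ and no logarithmic factor. It queries $\nabla\Phi$ componentwise through $y^*(x)$ computed exactly: when $\mu=L$ the dual Hessian of each $h_i$ is pinned at $LI$, so $h_i$ is an exact quadratic and the maximizer is available in closed form from one gradient.

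\textbf{Main obstacle.} The step I expect to be hardest is this log-free $\kappa=1$ upper bound. Averaged smoothness of the component primal functions must hold with a constant $O(L)$, and the dual elimination must be implementable exactly with $O(1)$ IFO calls per component gradient; both need to be verified.
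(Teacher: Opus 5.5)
Your proposal follows the paper's proof. The lower bound is the worst case over Theorem~\ref{thm:is} and the embedded minimization family $g_i(x)-\frac\mu2\norm{y}^2$ of \citet{densehiding}; the upper bound is RPDA for $\kappa\le\sqrt n$ and Proposition~\ref{prop:upper} for $\kappa\ge\sqrt n$; and $\kappa=1$ is Proposition~\ref{prop:boundary} matched against the minimization lower bound.

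There are two minor points. The imported bound of \citet{densehiding} holds only for sufficiently large $n$, so the finitely many small $n$ need a separate argument. The paper handles them with the decoupled chain and the hidden-component quadratic; Theorem~\ref{thm:is} would also suffice, since it gives $\Omega(n+L\Delta\eps^{-2})$ and $\sqrt n$ is bounded in that range. Also, the obstacle you anticipate at $\kappa=1$ is already resolved by Proposition~\ref{prop:boundary}. It forces the coupling to vanish, so $y^*\equiv0$, and it runs the log-free fixed-budget recursive method directly on $g_i=f_i(\cdot,0)$. No dual elimination is needed; recomputing $y^*(x)$ from gradients at each iterate would in fact cost a full pass per step.
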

\begin{proof}
Let $G=n^{-1}\sum_i g_i$ be an individually $L$-smooth minimization instance with $G(0)-\inf G\le\Delta$. Set
\[
 f_i(x,y)=g_i(x)-\frac\mu2\norm y^2.
\]
The joint smoothness constant is at most $\max\{L,\mu\}=L$, and $A_i=0$, $h_i(y)=\mu\norm y^2/2$, $y^*(x)=0$, and $\Phi=G$. The instance is in $\Bis$. Its dual gradient is known, so each joint IFO response can be simulated with one minimization query.

For all sufficiently large $n$, \citet[Theorem 3.1 and Appendix F.3, Equation (305)]{densehiding} give the lower bound $\Omega(n+\sqrt n\,L\Delta\eps^{-2})$ for unrestricted randomized IFO algorithms, including worst-case expected query budgets and expected gradient norm. Their oracle also returns function values, which only strengthens that lower bound for our gradient-only oracle. Jensen's inequality transfers it to \eqref{eq:criterion}. Choose $c_{\mathrm{IS}}$ small enough to satisfy their accuracy restriction.

For the finitely many smaller $n$, use the decoupled chain \eqref{eq:isdecoupledchain}, which is admissible whenever $\mu\le L$. Appendix~\ref{app:issmallkappa} proves its $\Omega(L\Delta\eps^{-2})$ bound, including expected budgets. The factor $\sqrt n$ is bounded by a universal constant in this finite range; Appendix~\ref{app:additive} supplies $\Omega(n)$. Thus the minimization lower bound used here holds for every $n\ge2$ after adjusting universal constants.

Taking the worst case over this family and Theorem~\ref{thm:is} proves the lower bound in \eqref{eq:iscombined} for every $\kappa\ge1$. This combination takes a maximum of two lower bounds; it does not multiply their coefficients.

For $\kappa\le\sqrt n$, apply RPDA, since $\Bis\subseteq\Fas$. Here $\sqrt n\,\kappa\le n$, so the minimum in the upper bound of \eqref{eq:iscombined} equals $\sqrt n\,\kappa$. For $\kappa\ge\sqrt n$, use Proposition~\ref{prop:upper}. In this range $\sqrt n\,\kappa$ is at least both $n$ and $\sqrt{n\kappa}$, and hence
\[
 n+\sqrt{n\kappa}
 \le2\min\{\sqrt n\,\kappa,n+\sqrt{n\kappa}\}.
\]
Since $L\Delta\eps^{-2}\ge1$, adding the one-time $n$ term does not change this upper bound. Both implementations are zero-respecting. Finally, at $\kappa=1$, Proposition~\ref{prop:boundary} gives the upper bound without logarithms, and the minimization family gives the matching lower bound.
\end{proof}

The accuracy-dependent lower coefficient is $\sqrt n$ for $1\le\kappa\le\sqrt n$, $\kappa$ for $\sqrt n\le\kappa\le n$, and $\sqrt{n\kappa}$ for $\kappa\ge n$. The corresponding upper coefficient is, up to a factor of two, $\sqrt n\,\kappa$, $n$, and $\sqrt{n\kappa}$. This proves the regime comparison in Section~\ref{sec:results}. It does not establish a $\tTheta$ characterization at intermediate condition numbers.

\subsection{A limitation of diagonal-cancelled dual paths}\label{app:iscap}
We give a limitation for the positive-Laplacian path family underlying our proof. It concerns the cost of exposing coordinates, not the complexity of solving an arbitrary IS--B instance.

\begin{proposition}[Exposure cost of a dual path]\label{prop:iscap}
Let $m\ge2$ and
\[
 H=\mu I+\sum_{k=1}^{m-1}w_k(e_k-e_{k+1})(e_k-e_{k+1})^\top,
 \qquad w_k>0.
\]
Suppose edge $k$ occurs in $r_k\in[n]$ components, with weight $nw_k/r_k$ in each owner. Assume that the component dual Hessians are positive sums of these edges and $\mu I$, and have norm at most $L$. Write $u=H^{-1}e_1$ and $\rho=u_1/u_m$. Starting with the first endpoint exposed, let $Q_{\mathrm{link}}$ be the minimum expected number of zero-respecting component queries needed to expose the last endpoint. If the primal amplitude and diagonal-cancellation budgets satisfy $h\le L$ and $h(\rho-1)\le L$, then
\begin{equation}\label{eq:iscap}
 hQ_{\mathrm{link}}=O\!\left(L\min\{\kappa,\sqrt{n\kappa}\}\right).
\end{equation}
The same conclusion holds if either budget is enlarged by a universal constant.
\end{proposition}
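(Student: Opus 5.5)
The plan is to bound $Q_{\mathrm{link}}$ from above by an explicit search strategy, and then to show that the two budgets force $h$ small unless the path is short. There are three ingredients.

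\textbf{Step 1: a search strategy.} At each internal edge $k$, query owners until the downstream coordinate appears. Edge $k$ has $r_k$ owners among $n$ labels, so a uniformly random ordering of untested labels finds one in expected $O(n/r_k)$ queries. Hence
\[
 Q_{\mathrm{link}}\le C\sum_{k=1}^{m-1}\frac{n}{r_k}.
\]
If $m-1$ is large, a deterministic sweep also gives $Q_{\mathrm{link}}\le C n(m-1)$. I will use the first bound.

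\textbf{Step 2: individual smoothness.} Each owner carries weight $nw_k/r_k$ on edge $k$. An edge of weight $\omega$ contributes $2\omega$ to that component's norm. So $\norm{K_i}\le L$ forces $nw_k/r_k\le L/2$, that is, $n/r_k\le L/(2w_k)$. Therefore
\[
 Q_{\mathrm{link}}\le C L\sum_k w_k^{-1}.
\]
The quantity $R:=\sum_k w_k^{-1}$ is the effective resistance of the path between its endpoints.

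\textbf{Step 3: relating $R$, $\rho$ and $\mu$.} This is the main obstacle. I would work with $u=H^{-1}e_1$. Since $\mathbf 1^\top H=\mu\mathbf 1^\top$, we get $\sum_j u_j=1/\mu$, and $u$ is monotone decreasing along the path. The interior equations give the flux across edge $k$:
\[
 w_k(u_k-u_{k+1})=\mu\sum_{j>k}u_j\ge\mu(m-k)u_m .
\]
Summing $u_k-u_{k+1}$ over $k$ yields
\[
 u_1-u_m=\sum_k \frac{\mu}{w_k}\sum_{j>k}u_j\ge\mu\,u_m\sum_k\frac{m-k}{w_k}.
\]
This gives the first lower bound, $\rho-1\ge\mu\sum_k (m-k)/w_k\ge\mu R$.

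Since the flux bound carries the factor $m-k$, I also want a quadratic-type bound. The Cauchy--Schwarz pairing $\sum_k (m-k)/w_k$ against $\sum_k w_k\le mL$ would give something like $\rho-1\gtrsim \mu R^2/(mL)$, but $m$ is not obviously bounded. The cleaner route is to use only the edges $k\le m/2$. There $m-k\ge m/2$, and $m/2\ge$ the number of those edges $\ge \sqrt{w_{\min}\cdots}$. Concretely, I would combine $\sum_{k\le m/2}(m-k)/w_k\ge \tfrac m2 R_{1/2}$ with $m\ge\sum_k 1\ge\sqrt{\sum_k w_k^{-1}\cdot\sum_k w_k}^{\,-1}$. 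Here $\sum_k w_k\le L$ holds because the average $H$ is itself a convex combination of component Hessians, each of norm at most $L$. Cauchy--Schwarz then gives $m^2\le R\sum_k w_k\le RL$, and in the other direction $m\ge R_{1/2}\min_k w_k$. Aiming at $\rho-1\gtrsim \mu R^2/L$ by this pairing, the diagonal budget $h(\rho-1)\le L$ yields
\[
 hR^2\lesssim L^2/\mu .
\]

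\textbf{Conclusion.} The two constraints combine as follows.
\begin{itemize}
\item From $h\le L$ and $hR^2\lesssim L^2/\mu$, we get $h\cdot LR=L\sqrt h\cdot\sqrt h R\lesssim L\sqrt L\cdot L/\sqrt\mu=L^2\sqrt{\kappa}\,$. Dividing by $L$ and using that $Q_{\mathrm{link}}$ is also at most $n/\min_k r_k$-type sums, this gives the $\sqrt{n\kappa}$ branch once I also use $w_k\le L/2$ and $r_k\le n$, so that $Q_{\mathrm{link}}\lesssim \sqrt{n\,LR}$.
\item From the linear bound $h\mu R\le h(\rho-1)\le L$, we get $hLR\le L^2/\mu=L\kappa$, which is the $\kappa$ branch.
\end{itemize}

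I expect the sharp combination producing exactly $\sqrt{n\kappa}$, rather than a weaker power, to be the delicate point. In particular, the search cost must be balanced between $\sum_k n/r_k$ and $\sum_k 1$, using both $Q_{\mathrm{link}}\ge m-1$ moves and the bound $Q_{\mathrm{link}}\le\sqrt{m\cdot n\sum_k 1/r_k}$. If the pairing fails, I would fall back to optimizing over the extremal uniform-weight path, where the hyperbolic formula \eqref{eq:cosh} gives everything explicitly, and try to argue by a monotonicity/majorization argument that uniform weights are extremal.

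Finally, the claim that the conclusion survives when either budget is enlarged by a universal constant follows at once. Every inequality above is homogeneous in the budget constants, so enlarging a budget only changes the implied constant in the $O$.
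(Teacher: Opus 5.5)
\textbf{The $\kappa$ branch is correct; the $\sqrt{n\kappa}$ branch has a genuine gap.} Your search bound (Step~1), the owner bound $nw_k/r_k\le L/2$ (Step~2), and the linear resistance bound $\rho-1\ge\mu\sum_k(m-k)/w_k\ge\mu R$ are correct. Together they give the $\kappa$ branch as in the paper: $hQ_{\mathrm{link}}\le CL\,hR\le CL\cdot L/\mu=CL\kappa$.

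The $\sqrt{n\kappa}$ branch, however, is not proved, and the route you sketch cannot work.
\begin{itemize}
\item For $m=2$ one has $\rho-1=\mu/w_1=\mu R$ exactly. So no superlinear lower bound on $\rho-1$ in terms of $R=\sum_kw_k^{-1}$ alone can hold. Your target $\rho-1\gtrsim\mu R^2/L$ is also dimensionally inconsistent, and its dimensionless variant $\mu LR^2=(L/w_1)\,\mu R$ fails as $w_1/L\to0$.
\item The supporting claim $\sum_kw_k\le L$ is false. The bound $\norm{H}\le L$ only controls $\max_kw_k$, and uniform weights $w_k=w$ give $\sum_kw_k=(m-1)w$.
\item The closing estimate $Q_{\mathrm{link}}\lesssim\sqrt{nLR}$ does not follow from what you have, namely $Q_{\mathrm{link}}\lesssim nS$ and $Q_{\mathrm{link}}\lesssim LR$ with $S=\sum_kr_k^{-1}$. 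If $w_k=r_kL/(2n)$, then $LR=2nS$ and $\sqrt{nLR}=n\sqrt{2S}$, which is far below $nS$ when $S$ is large.
\end{itemize}

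\textbf{The missing idea.} The quadratic gain comes from the cap $n/r_k\le n$ (that is, $r_k\ge1$) on the per-edge search cost. Step~2 discards this cap when it replaces $n/r_k$ by $L/(2w_k)$, and resistances $1/w_k$ have no such cap.

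Keep $S$ as the variable. Since $r_k^{-1}\in(0,1]$ and the coefficients $m-k$ are the distinct integers $1,\dots,m-1$, filling the smallest coefficients first gives $\sum_k(m-k)/r_k\ge\max\{S,S^2/2\}$. Combining this with $\mu/w_k\ge n/(\kappa r_k)$ gives $\rho-1\ge(n/\kappa)\max\{S,S^2/2\}$. The cancellation budget then yields $hnS\le\min\{\kappa L,\,2\kappa L/S\}$, and $h\le L$ yields $hnS\le nLS$. Hence $hQ_{\mathrm{link}}\le hnS\le L\min\{\kappa,\sqrt{2n\kappa}\}$.

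Equivalently, in your notation, apply the same rearrangement to $y_k=\min\{n,L/(2w_k)\}\ge n/r_k$. This gives $\rho-1\ge\mu(\sum_ky_k)^2/(nL)$, hence $h(\sum_ky_k)^2\le nL\kappa$ and $h\sum_ky_k\le L\sqrt{n\kappa}$.

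With this step the uniform-weight extremality fallback is unnecessary. Your remark about enlarging the budgets by universal constants is fine.
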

\begin{proof}
Positivity and the component norm bound give $w_k\le r_kL/n$. The inverse of $H$ is entrywise positive. Summing the node equations $(Hu)_j=0$ for $j=k+1,\ldots,m$ gives
\[
 w_k(u_k-u_{k+1})=\mu\sum_{j=k+1}^{m}u_j.
\]
Thus $u_1>\cdots>u_m>0$. Summing the increments and using $u_j\ge u_m$ yields
\begin{equation}\label{eq:isweightedresistance}
 \rho-1\ge\mu\sum_{k=1}^{m-1}\frac{m-k}{w_k}
 \ge\frac n\kappa\sum_{k=1}^{m-1}\frac{m-k}{r_k}.
\end{equation}
Set $S=\sum_k r_k^{-1}$. Since $0<r_k^{-1}\le1$, we have
\[
 \sum_{k=1}^{m-1}\frac{m-k}{r_k}\ge\max\{S,S^2/2\}.
\]
For the second inequality, allocate mass $S$ to the smallest integer weights first. If $S=a+t$ with $a=\lfloor S\rfloor$ and $0\le t<1$, the resulting minimum is $a(a+1)/2+(a+1)t\ge S^2/2$.

At a frontier, query independent uniform component labels, putting a nonzero value at the exposed endpoint. An owner is hit with probability $r_k/n$ on each call, so the expected cost to expose the next coordinate is at most $n/r_k$. This provides an upper bound on exposure cost, regardless of the owner assignment. Hence $Q_{\mathrm{link}}\le nS$. Including the two common primal--dual coupling edges changes this bound only by a constant factor, because they require at most two extra calls and $nS\ge m-1\ge1$.

The cancellation budget and \eqref{eq:isweightedresistance} imply
\[
 hnS\le\kappa L,
 \qquad hnS\le\frac{2\kappa L}{S}.
\]
The amplitude budget also gives $hnS\le nLS$. Therefore
\[
 hnS\le\min\{\kappa L,nLS,2\kappa L/S\}
 \le L\min\{\kappa,\sqrt{2n\kappa}\},
\]
which proves \eqref{eq:iscap}.
\end{proof}

The dual-path construction for $\kappa\ge2$ in Theorem~\ref{thm:is} satisfies these budgets and attains the coefficient in \eqref{eq:iscap}, up to universal constants. With $d=\Theta(h\Delta\eps^{-2})$ primal links, an argument charging only for exposing these paths therefore cannot improve its polynomial lower rate by adjusting positive edge weights or owner multiplicities. This statement does not cover general support-hiding constructions, dense information hiding, or hardness after support is exposed. In particular, it is not an upper bound on the minimax complexity of $\Bis$.

\section{Bilinear Upper-Bound Reduction}\label{app:upper}
We prove Proposition~\ref{prop:upper} to make the comparison in Table~\ref{tab:comparison-is} explicit. Consider \eqref{eq:bilinearclass} and, at outer iteration $t$, define
\[
 q_t(x,y)=f(x,y)+L\norm{x-x_t}^2,
 \qquad Q_t(x)=\max_yq_t(x,y)=\Phi(x)+L\norm{x-x_t}^2.
\]
Each $g_i+L\norm{\cdot-x_t}^2$ is $L$-strongly convex and $3L$-smooth. The dual term is $\mu$-strongly convex and $L$-smooth, and the bilinear coupling has norm at most $L$. The finite-sum separable saddle-point bound of \citet[Theorem 6 of the arXiv version]{jst} therefore has coefficient
\begin{equation}\label{eq:innercoefficient}
 n+\frac1{\sqrt n}\sum_{i=1}^{n}
 \left(\sqrt{3L/L}+\sqrt{L/\mu}+\frac{L}{\sqrt{L\mu}}\right)
 =O(n+\sqrt{n\kappa}).
\end{equation}
Write $\mathcal G_t(x,y)=\max_v q_t(x,v)-\min_u q_t(u,y)$ for the saddle gap, and let $\mathcal G_t^{\mathrm{in}}$ and $\mathcal G_t^{\mathrm{out}}$ be its values at the input $(x_t,y_t)$ and output $(x_{t+1},y_{t+1})$. Theorem~6 of \citet{jst} directly bounds this gap, not only distance to the saddle point. In particular, with $\chi=n+\sqrt{n\kappa}$ and any fixed $0<\theta<1$, its deterministic loop limits give a query budget
\begin{equation}\label{eq:jstrelative}
 K_\theta=O\!\left(\chi\log(2\chi)\log(2\chi/\theta)\right),
 \qquad
 \E[\mathcal G_t^{\mathrm{out}}\mid\mathcal H_t]
 \le\theta\mathcal G_t^{\mathrm{in}},
\end{equation}
where $\mathcal H_t$ is the history before the inner call. This relative form follows from their Proposition~5 and the gap conversion in the proof of Theorem~6. Equivalently, set their accuracy-to-initial-gap ratio to $\theta$: Algorithms~6--7 use that ratio only in their loop limits. These limits depend on $\theta$ and the class parameters, not on the realized input gap.

The separate gradient oracles required by this method can be implemented from the joint IFO. Cache $\nabla f_i(0,0)$ for each $i$. A query at $(x,0)$ returns $\nabla g_i(x)$ and $A_ix-\nabla h_i(0)$; a query at $(0,y)$ returns $\nabla g_i(0)+A_i^\top y$ and $-\nabla h_i(y)$. Thus the separate quantities are recovered with constant overhead after $n$ initial calls. Starting at the origin, the gradient-based implementation is zero-respecting.

Suppose each inner output satisfies $\E\mathcal G_t^{\mathrm{out}}\le\delta$. Then $\E[Q_t(x_{t+1})-\min Q_t]\le\delta$. Write $d_t=x_{t+1}-x_t$ and $e_t=\nabla Q_t(x_{t+1})$. Since $\Phi$ is $L$-weakly convex and $L(1+\kappa)$-smooth, $Q_t$ is $L$-strongly convex and $L(\kappa+3)$-smooth. Hence
\[
 \begin{gathered}
 \E\norm{e_t}^2\le2L(\kappa+3)\delta,\qquad
 \sum_{t=0}^{T-1}L\E\norm{d_t}^2\le\Delta+T\delta,\\
 \norm{\nabla\Phi(x_{t+1})}^2\le2\norm{e_t}^2+8L^2\norm{d_t}^2.
 \end{gathered}
\]
For an output uniform on $x_1,\ldots,x_T$, these inequalities imply
\begin{equation}\label{eq:proxconvert}
 \E\norm{\nabla\Phi(\hat x)}^2
 \le\frac{8L\Delta}{T}+4L(\kappa+5)\delta.
\end{equation}
Take $T=\max\{1,\lceil16L\Delta/\eps^2\rceil\}$ and $\delta=\eps^2/[16L(\kappa+5)]$.

We now choose a deterministic inner budget that ensures these unconditional gap bounds. At $t=0$, exact dual initialization gives
\[
 \mathcal G_0^{\mathrm{in}}
 \le\frac{\norm{\nabla\Phi(0)}^2}{2L}
 \le(1+\kappa)\Delta.
\]
For $t\ge1$, write $r_x=\nabla_xq_{t-1}(x_t,y_t)$ and $r_y=\nabla_yq_{t-1}(x_t,y_t)$. Smoothness of the primal and dual slices gives $\norm{r_x}^2\le6L\mathcal G_{t-1}^{\mathrm{out}}$ and $\norm{r_y}^2\le2L\mathcal G_{t-1}^{\mathrm{out}}$. The new residual is $(r_x-2Ld_{t-1},r_y)$. Applying strong convexity and strong concavity to its two slices yields
\begin{equation}\label{eq:jstwarm}
 \begin{aligned}
 \mathcal G_t^{\mathrm{in}}
 &\le\frac{\norm{r_x-2Ld_{t-1}}^2}{2L}
       +\frac{\norm{r_y}^2}{2\mu}\\
 &\le(6+\kappa)\mathcal G_{t-1}^{\mathrm{out}}
       +4L\norm{d_{t-1}}^2.
 \end{aligned}
\end{equation}
Fix, before running the algorithm,
\[
 B=(\kappa+5)\Delta+(4T+\kappa+6)\delta,
 \qquad \theta=\delta/B,
\]
using the $T$ and $\delta$ chosen above, and use the same budget $K_\theta$ at every center. The bound at $t=0$ and \eqref{eq:jstrelative} give $\E\mathcal G_0^{\mathrm{out}}\le\delta$. Inductively, if the earlier output gaps have expectation at most $\delta$, the partial descent sum gives $L\E\norm{d_{t-1}}^2\le\Delta+t\delta$. Thus
\[
 \E\mathcal G_t^{\mathrm{in}}
 \le4\Delta+(4t+6+\kappa)\delta\le B,
 \qquad
 \E\mathcal G_t^{\mathrm{out}}
 \le\theta\E\mathcal G_t^{\mathrm{in}}\le\delta.
\]
This proves the gap bounds needed in \eqref{eq:proxconvert}. The budget $K_\theta$ is determined by $n,L,\mu,\Delta,\eps$ alone; no random initial gap is used to set the iteration count.

Combining \eqref{eq:innercoefficient} and \eqref{eq:proxconvert}, and including the cached queries, gives $\tO((n+\sqrt{n\kappa})(1+L\Delta\eps^{-2}))$ expected queries. Under the assumption $\eps^2\le L\Delta$, the factor $1+L\Delta\eps^{-2}$ is at most $2L\Delta\eps^{-2}$, proving \eqref{eq:isupper}.\hfill$\square$

This reduction invokes the separable solver anew at each proximal center. Its full-sum initialization and coupling-gradient refreshes are included in \eqref{eq:innercoefficient}; warm starting controls the logarithmic dependence on the initial gap. It does not establish an amortization of the $n$ term across outer iterations. Removing that repeated cost would require a further algorithm and convergence argument, beyond this reduction.

\section{The Boundary Case and the Placement of the Additive Term}\label{app:boundary}
\begin{proposition}[The boundary case $\kappa=1$]\label{prop:boundary}
Every instance in $\Fas(n,L,L,\Delta)$ satisfies $\Phi(x)=f(x,0)$. A recursive variance-reduced method attains \eqref{eq:criterion} with a fixed budget of $O(n+\sqrt n\,L\Delta\eps^{-2})$ IFO calls.
\end{proposition}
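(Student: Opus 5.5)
The proof has two parts: an identification of the dual maximizer, then a reduction to finite-sum minimization.

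\emph{Step 1: the maximizer is zero.} When $\kappa=1$ we have $\mu=L$. The first step is to show that every $f(x,\cdot)$ is then exactly a concave quadratic with Hessian $-LI$ in $y$. Averaged smoothness \eqref{eq:as} and Jensen's inequality give that $\nabla f$ is $L$-Lipschitz jointly. In particular, $-\nabla_y f(x,\cdot)$ is $L$-Lipschitz and $L$-strongly monotone. A map $T$ with both properties has $T-LI$ equal to zero. Indeed, $\|Tu-Tv\|^2\le L^2\|u-v\|^2$ and $\langle Tu-Tv,u-v\rangle\ge L\|u-v\|^2$ together give $\|(T-LI)(u-v)\|^2\le0$. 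Hence $\nabla_y f(x,y)=\nabla_y f(x,0)-Ly$.

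Next I show that $\nabla_y f(x,0)=0$ for all $x$. Joint $L$-Lipschitzness of $\nabla f$ bounds the combined Jacobian of $\nabla f$ by $L$ in norm (using a.e.\ differentiability, or a difference-quotient version). Its $yy$-block already equals $-LI$, which has norm exactly $L$. Testing the Jacobian on vectors $(\delta u,v)$ with $v$ fixed and $\delta\to0$ then forces the cross block $\nabla_{xy}f$ to vanish. This is the standard fact that a contraction with a unitary corner block has zero off-diagonal blocks. So $\nabla_y f(x,0)$ is constant in $x$, and by $y^*(0)=0$ it is zero. Therefore $y^*(x)=0$ and $\Phi(x)=f(x,0)$. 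I expect this cross-term step to be the main obstacle, because it must be done without assuming twice differentiability. I would first try the difference-quotient form of the norm argument: compare $\|\nabla f(z+\delta e)-\nabla f(z)\|$ with $L\delta\|e\|$ for perturbations $e=(u,v)$, and let $\delta\to0$ along the $y$-direction combined with a small $x$-perturbation.

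\emph{Step 2: reduction to finite-sum minimization.} Given Step 1, minimize $G(x)=f(x,0)=\frac1n\sum_i f_i(x,0)$ using only $x$-parts of IFO queries at points $(x,0)$. By \eqref{eq:as}, the components $g_i=f_i(\cdot,0)$ satisfy the mean-squared smoothness condition with constant $L$, and $G(0)-\inf G\le\Delta$. Run SPIDER/SARAH \cite{spider} with epoch length $\sqrt n$, minibatch $\sqrt n$, and step $\Theta(1/L)$, and output a uniformly random iterate. The standard analysis, which needs only mean-squared smoothness, gives
\[
\E\|\nabla G(\hat x)\|^2\le\eps^2
\]
after $O(L\Delta\eps^{-2})$ iterations. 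The cost is $O(n+\sqrt n\,L\Delta\eps^{-2})$, deterministic and fixed. Since $\nabla\Phi=\nabla G$, this is criterion \eqref{eq:criterion}. The method is zero-respecting: queries stay at $y=0$, which is allowed since every $\nabla_y$ response is zero. Its iterates are linear combinations of returned gradients and the origin, so Definition~\ref{def:zr} holds.
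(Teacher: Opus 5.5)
Your proof is correct. Step~2 is the paper's own argument: the paper runs the same recursive estimator with $q=b=\lceil\sqrt n\rceil$, $\eta=1/(4L)$ and a uniformly selected iterate, and only writes out the standard variance and descent bounds. The first half of Step~1 also matches.

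The difference is in the cross-term step you flagged as the main obstacle. The paper settles it without Jacobians. With $v=y-y'$, $D=\nabla_y f(x,y)-\nabla_y f(x,y')$ and $E=\nabla f(x,y)-\nabla f(x,y')$, it writes the chain
\[
 L\norm v^2\le-\langle D,v\rangle\le\norm D\norm v\le\norm E\norm v\le L\norm v^2 .
\]
Equality in the third inequality forces the $x$-part of $E$ to vanish, so $\nabla_x f(x,y)$ does not depend on $y$. The paper then transfers this to the other cross block through the function itself. The difference $f(x,y)-f(x,0)=a(x)^\top y-\tfrac L2\norm y^2$ has zero $x$-gradient, so it is independent of $x$. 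Hence $a$ is constant, and $y^*(0)=0$ makes it zero. This needs only $C^1$.

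Your route targets $\partial_x\nabla_y f$ directly. That is row information of the Jacobian, not column information. So the zeroth-order term of your $(\delta u,v)$ test is not enough: it only kills $\partial_y\nabla_x f$, the block the paper uses. You also need the first-order term in $\delta$, or symmetry of the a.e.\ Hessian. On top of that you need Rademacher's theorem, and the observation that $a(x)=\nabla_y f(x,y)+Ly$ for every $y$, since the slice $y=0$ is null.

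A derivative-free version of your idea closes the step in one line. Set $r=a(x+u)-a(x)$. Joint Lipschitzness of $\nabla_y f$ gives $\norm{r-Lv}^2\le L^2(\norm u^2+\norm v^2)$, that is, $\norm r^2-2L\langle r,v\rangle\le L^2\norm u^2$. Choosing $v=-tr$ and letting $t\to\infty$ forces $r=0$.

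One small inaccuracy: the component responses $\nabla_y f_i(x,0)$ need not vanish; only their average does. Querying at $y=0$ is admissible anyway, because the zero vector has empty support.
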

We prove Proposition~\ref{prop:boundary}. Averaged smoothness implies that the average function $f$ has an $L$-Lipschitz joint gradient. Suppose $\mu=L$. For fixed $x$ and $v=y-y'$, strong concavity and smoothness give
\begin{align*}
 L\norm v^2
 &\le-\langle\nabla_y f(x,y)-\nabla_y f(x,y'),v\rangle\\
 &\le\norm{\nabla_y f(x,y)-\nabla_y f(x,y')}\norm v\\
 &\le\norm{\nabla f(x,y)-\nabla f(x,y')}\norm v
 \le L\norm v^2.
\end{align*}
Equality throughout implies
\[
 \nabla_y f(x,y)-\nabla_y f(x,y')=-L(y-y'),\qquad
 \nabla_x f(x,y)=\nabla_x f(x,y').
\]
Integrating in $y$ gives $f(x,y)=f(x,0)+a(x)^\top y-L\norm y^2/2$. The second identity says that $f(x,y)-f(x,0)$ is independent of $x$, so $a(x)$ is constant. Exact dual initialization makes this constant zero. Therefore
\begin{equation}\label{eq:boundaryreduction}
 f(x,y)=G(x)-\frac L2\norm y^2,\qquad
 G(x):=\frac1n\sum_i g_i(x),\qquad g_i(x):=f_i(x,0),\qquad \Phi=G.
\end{equation}
The $g_i$ satisfy averaged $L$-smoothness and $G(0)-\inf G\le\Delta$. Their gradients are obtained from the primal part of an IFO response at $(x,0)$.

For completeness, the following standard recursive-estimator argument gives a fixed query budget and the squared-gradient criterion directly. It is an epoch version of the variance-reduction principle in \citet{spider}. Take $q=b=\lceil\sqrt n\rceil$ and $\eta=1/(4L)$. At iteration $t$, compute $v_t=\nabla G(x_t)$ if $t$ is a multiple of $q$. Otherwise draw $b$ fresh, independent uniform component indices and set
\[
 v_t=v_{t-1}+\frac1b\sum_{i\in I_t}
       \bigl(\nabla g_i(x_t)-\nabla g_i(x_{t-1})\bigr).
 \qquad x_{t+1}=x_t-\eta v_t.
\]
The update for $x_{t+1}$ is used in both cases. Let $e_t=v_t-\nabla G(x_t)$ and let $t_0=q\lfloor t/q\rfloor$ denote the current epoch's first iteration. Conditional unbiasedness of each fresh increment, together with averaged smoothness, yields
\[
 \E\norm{e_t}^2\le\frac{L^2\eta^2}{b}
       \sum_{s=t_0+1}^{t}\E\norm{v_{s-1}}^2.
\]
Summing over all iterations and noting that each term appears at most $q$ times gives
\begin{equation}\label{eq:boundaryvariance}
 \sum_{t=0}^{T-1}\E\norm{e_t}^2
 \le\frac{qL^2\eta^2}{b}\sum_{t=0}^{T-1}\E\norm{v_t}^2
 =\frac1{16}\sum_{t=0}^{T-1}\E\norm{v_t}^2.
\end{equation}
Smoothness of $G$ also gives the pointwise descent inequality
\[
 G(x_{t+1})\le G(x_t)-\frac\eta2\norm{\nabla G(x_t)}^2
 -\frac\eta2(1-L\eta)\norm{v_t}^2+\frac\eta2\norm{e_t}^2.
\]
Sum this inequality and use \eqref{eq:boundaryvariance}. Since $1-L\eta=3/4>1/16$, a uniformly selected $\hat x\in\{x_0,\ldots,x_{T-1}\}$ satisfies
\[
 \E\norm{\nabla\Phi(\hat x)}^2
 \le\frac{2\Delta}{\eta T}=\frac{8L\Delta}{T}.
\]
Choosing $T=\max\{1,\lceil8L\Delta/\eps^2\rceil\}$ proves the desired accuracy. The number of queries is deterministically at most $n\lceil T/q\rceil+2bT=O(n+\sqrt n\,L\Delta\eps^{-2})$. The iterates and output are zero-respecting, because each update is a linear combination of preceding component gradients.

Together with Theorem~\ref{thm:as}, this identifies the rate $\Theta(n+\sqrt n\,L\Delta\eps^{-2})$ at $\kappa=1$ in the theorem's accuracy range. Letting $n$ grow and taking $L\Delta\eps^{-2}\gg\sqrt n$ shows that a uniform $\Omega(nL\Delta\eps^{-2})$ lower bound is impossible. Theorem~\ref{thm:smallupper} extends the matching upper bound to $1<\kappa\le\sqrt n$.\hfill$\square$
\section{Proof of the RPDA Upper Bound}\label{app:smallupper}
We prove the complexity guarantee of Algorithm~\ref{alg:recursiveprox} stated in Theorem~\ref{thm:smallupper}. The proof keeps the recursive estimator across successive proximal subproblems. All iteration counts and batch sizes are deterministic. No function values, exact minimization oracle, or component convexity assumption is used.

The iteration and sampling parameters are
\begin{equation}\label{eq:algorithmparameters}
 \begin{aligned}
 \kappa&=L/\mu,&q&=\max\{1,\lfloor\sqrt n/\kappa\rfloor\},&M&=\lceil4\kappa\log2048\rceil,\\
 b_0&=\lceil\beta\kappa\rceil,&b&=\lceil\beta\kappa q\rceil,&T&=\max\{1,\lceil256L\Delta/\eps^2\rceil\}.
 \end{aligned}
\end{equation}
The refinement counts are
\begin{equation}\label{eq:algorithmrefinement}
 R_0=\lceil\log_2(384(1+\kappa))\rceil,\qquad
 R_1=\max\left\{1,\left\lceil\log_2\!\left(\frac{1600L(1+\kappa)^2\Delta}{\eps^2}\right)\right\rceil\right\}.
\end{equation}
The parameter $\beta=120000000$ is sufficient for the proof; it is not optimized. In particular, $R_0+R_1=O(\log(2+\kappa)+\log(2+L(1+\kappa)^2\Delta\eps^{-2}))$ accounts for the logarithmic factors in the RPDA bound. At the start of final refinement, the state $z=(x,y)$ is restored to the saved pair $z_*$ and the estimator is reset there. In the main loop, the estimator is retained between centers except at epoch boundaries.

\subsection{The proximal subproblem and its dual function}
For a center $a$, define
\begin{equation}\label{eq:upperobjects}
 q_a(x,y)=f(x,y)+L\norm{x-a}^2,\quad
 p_a(y)=\arg\min_x q_a(x,y),\quad
 D_a(y)=q_a(p_a(y),y),\quad
 v_a=\max_y D_a(y).
\end{equation}
The function $q_a(\cdot,y)$ is $L$-strongly convex and $3L$-smooth. Its cross-gradient Lipschitz constant is at most $L$, so $p_a$ is $1$-Lipschitz. Consequently, $D_a$ is $\mu$-strongly concave and $2L$-smooth, with $\nabla D_a(y)=\nabla_y f(p_a(y),y)$. These statements concern the average function; averaged smoothness suffices. Let $x_a^*$ be the primal coordinate of the unique saddle point of $q_a$ and set
\begin{equation}\label{eq:upperpotential}
 G_a(y)=v_a-D_a(y),\qquad
 W_a(x,y)=G_a(y)+L\norm{x-p_a(y)}^2.
\end{equation}
Strong convexity in $x$ gives
\begin{equation}\label{eq:upperdistance}
 \norm{p_a(y)-x_a^*}^2\le\frac{2G_a(y)}L,
 \qquad \norm{x-x_a^*}^2\le\frac{4W_a(x,y)}L.
\end{equation}
Indeed, $q_a(x_a^*,y)\le v_a$, whereas $q_a(x_a^*,y)-D_a(y)\ge L\norm{x_a^*-p_a(y)}^2/2$.

\subsection{An inner iteration with bounded gradient errors}
At every queried point $z=(x,y)$, suppose an estimate $v$ of $\nabla f(z)$ is available. Add $2L(x-a)$ exactly to its primal part. An inner call first takes six primal descent steps of length $1/(3L)$ with $y$ fixed. It then repeats the following operations $M$ times: one dual ascent step of length $1/(4L)$, followed by six primal descent steps with the new $y$ fixed. Evaluate or update the estimator at every new point, including the output. Thus an inner call uses at most $7(M+1)$ new estimator evaluations.

\begin{lemma}[Inner contraction and total movement]\label{lem:upperinner}
Let $W=W_a(x_{\mathrm{in}},y_{\mathrm{in}})$, and suppose every gradient-estimation error during an inner call has norm at most $e$. Set
\[
 M=\lceil4\kappa\log2048\rceil,\qquad \sigma=1/2048.
\]
For the output potential $P=W_a(x_{\mathrm{out}},y_{\mathrm{out}})$ and the sum $V$ of squared distances between successive queried points, including the input, we have
\begin{equation}\label{eq:upperinner}
 P\le\sigma W+\frac{17\kappa e^2}{L},\qquad
 V\le C_0\left(\frac WL+\frac{Me^2}{L^2}\right),\qquad C_0=400.
\end{equation}
These inequalities are pathwise and allow arbitrary, dependent errors.
\end{lemma}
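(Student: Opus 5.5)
The plan is to analyze one primal phase (six steps with $y$ fixed) and one dual step separately, and then chain them through the potential $W_a$. Throughout, write $\tilde v=\nabla q_a(z)+\xi$ with $\norm{\xi}\le e$.

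\textbf{Primal phase.} With $y$ fixed, $q_a(\cdot,y)$ is $L$-strongly convex and $3L$-smooth, and the step length is $1/(3L)$. Inexact gradient descent then contracts the distance to $p_a(y)$: each step gives
\[
\norm{x^+-p_a(y)}\le(1-1/3)\norm{x-p_a(y)}+e/(3L).
\]
After six steps,
\[
\norm{x-p_a(y)}\le(2/3)^6\norm{x_0-p_a(y)}+e/L.
\]
Since $(2/3)^{12}<1/128$, the term $L\norm{x-p_a(y)}^2$ is reduced by a constant factor, up to an additive $O(e^2/L)$. The squared movement in this phase is at most $O(\norm{x_0-p_a(y)}^2+e^2/L^2)$, because the step increments decay geometrically.

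\textbf{Dual step.} The update is $y^+=y+\tilde v_y/(4L)$, where $\tilde v_y=\nabla_y f(x,y)+\xi_y$. Write
\[
\nabla_y f(x,y)=\nabla D_a(y)+r,\qquad \norm r\le L\norm{x-p_a(y)},
\]
using the cross-Lipschitz bound. Since $D_a$ is $2L$-smooth and $\mu$-strongly concave, an ascent step of length $1/(4L)$ gives
\[
G_a(y^+)\le(1-\mu/(4L))G_a(y)+O\bigl(L\norm{x-p_a(y)}^2+e^2/L\bigr).
\]
This is the usual inexact-gradient ascent bound, with the Polyak–Łojasiewicz inequality $\norm{\nabla D_a}^2\ge2\mu G_a$.

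Because $p_a$ is $1$-Lipschitz, the dual move also shifts the target point:
\[
\norm{x-p_a(y^+)}^2\le2\norm{x-p_a(y)}^2+2\norm{y^+-y}^2.
\]
Here $\norm{y^+-y}^2\le O(\norm{\nabla D_a}^2+\norm r^2+e^2)/L^2$, and $\norm{\nabla D_a}^2\le4LG_a$. This shift is therefore $O(G_a/L+\norm{x-p_a}^2+e^2/L^2)$.

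\textbf{Combining into the potential.} Following the dual step with six primal steps, which contract by $(2/3)^{12}$, I would aim for a potential of the form $\Psi=G_a+\lambda L\norm{x-p_a(y)}^2$. This choice should absorb the $O(L\norm{x-p_a}^2)$ coupling term and the $O(G_a)$ shift, since $(2/3)^{12}\cdot O(1)$ is small. The aim is a per-round recursion
\[
\Psi^+\le(1-1/(8\kappa))\Psi+c\,e^2/L.
\]
The main obstacle is precisely this constant bookkeeping. The shift contributes a term $c\,G_a(y)$ with no factor $1/\kappa$, so the primal contraction must make it $\ll\mu/L$ times $G_a$; a constant factor alone does not obviously achieve this. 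I would first try to bound the shift more carefully using
\[
\norm{y^+-y}\le\norm{\nabla D_a(y)}/(4L)+\dots,
\]
so that the shift term is $\norm{\nabla D_a}^2/(16L)$. I would then compare it with the descent decrease $\norm{\nabla D_a}^2/(8L)$ from the smoothness inequality, rather than with $G_a$. After the $(2/3)^{12}$ contraction this leaves a net negative coefficient, and the strong-concavity (PL) inequality then gives the $1-1/(8\kappa)$ rate.

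\textbf{Iterating and the movement sum.} Iterating over $M=\lceil4\kappa\log2048\rceil$ rounds gives
\[
\Psi_M\le e^{-M/(8\kappa)}\Psi_0+8c\,\kappa e^2/L.
\]
A factor-two change in the constant of $M$ is absorbed by retuning $\lambda$ and the rate. Converting between $\Psi$ and $W_a$ with constant factors yields $P\le\sigma W+17\kappa e^2/L$.

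For the movement bound, sum the per-round movements, each $O(\Psi_k/L+e^2/L^2)$, over the $M+1$ rounds. The geometric decay gives
\[
\sum_k\Psi_k\le O(\kappa)\,\Psi_0+O(M\kappa e^2/L),
\]
which is too weak by a factor of $\kappa$. The fix is to bound the movement directly by the per-round decrease: $\norm{y^+-y}^2\lesssim(\Psi_k-\Psi_{k+1}+c\,e^2/L)/L$. Telescoping then gives
\[
V\le C_0\bigl(W/L+Me^2/L^2\bigr).
\]
All of these inequalities are deterministic given $\norm{\xi}\le e$, so the bounds hold pathwise, as the lemma requires.
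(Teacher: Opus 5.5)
Your outline is the paper's own. It uses the same $2/3$ per-step primal contraction (six steps give $r=(2/3)^6<1/8$ and additive error $e/L$), the same inexact ascent step on $D_a$ with perturbation of norm at most $L\norm{x-p_a(y)}+e$ followed by the PL inequality, the same potential $G_a+L\norm{x-p_a(y)}^2$, and telescoping for $V$. The gap is in the step you single out as the main obstacle, and it costs exactly the constants the lemma asserts. You pay for the target shift $3r^2L\norm{d_j}^2$ (with $d_j=y_{j+1}-y_j$) out of the decrease $\norm{\nabla D_a(y_j)}^2/(8L)$. That decrease is also your only source of the linear rate, so the rate falls strictly below $1/(4\kappa)$; as you write it, it is $1/(8\kappa)$. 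With $M=\lceil4\kappa\log2048\rceil$ fixed, you then obtain only the coefficient $2048^{-1/2}$ in place of $\sigma=1/2048$. Even spending only a fraction $\epsilon>0$ of that decrease on the shift leaves $2048^{-(1-\epsilon)}>1/2048$ for large $\kappa$. Taking $\lambda\ne1$ adds a further conversion factor between $\Psi$ and $W_a$. So ``retuning $\lambda$ and the rate'' does not, as stated, deliver $\sigma=1/2048$ or the coefficient $17$. These constants matter: they enter $\sigma+136C_0/\beta<1/1024$ in the batch-size argument.

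The missing device is to keep the second term of the ascent inequality instead of discarding it. Let $g$ be the dual search direction and $\eta=1/(4L)$. The identity $\langle\nabla D_a,g\rangle=\tfrac12(\norm{\nabla D_a}^2+\norm g^2-\norm{g-\nabla D_a}^2)$ and $2L$-smoothness produce the term $-\tfrac\eta4\norm g^2=-L\norm{d_j}^2$ alongside $-\tfrac\eta2\norm{\nabla D_a}^2$:
\[
 G_{j+1}\le\Bigl(1-\frac1{4\kappa}\Bigr)G_j-L\norm{d_j}^2+\frac L4a_j^2+\frac{e^2}{4L}.
\]
Combine this with $a_{j+1}^2\le3r^2(a_j^2+\norm{d_j}^2)+3e^2/L^2$ and $3r^2<3/64$. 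The shift is then absorbed by $-L\norm{d_j}^2$ outright, and the entire gradient term goes to PL. With $\lambda=1$, so no conversion is needed, you get $P_{j+1}\le(1-\frac1{4\kappa})P_j-\frac L4a_j^2-\frac L2\norm{d_j}^2+\frac{4e^2}{L}$. From $P_0\le W+2e^2/L$ this gives $\sigma=1/2048$ and $16\kappa e^2/L+2\sigma e^2/L\le17\kappa e^2/L$. Its retained negative terms are exactly what your telescoping fix for $V$ needs. You also need $\sum_ja_j^2\le4P_0/L+16Me^2/L^2$ there, not only the dual movement. Each primal group starts at distance at most $a_j+\norm{d_j}$ and moves $O(a_j^2+\norm{d_j}^2+e^2/L^2)$. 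A sharper Young split of the cross term $\langle\nabla D_a,g-\nabla D_a\rangle$ might also leave room for your route, but that bookkeeping would have to be carried out.
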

\begin{proof}
Fix $y$ and write $\psi(x)=q_a(x,y)$. Since $\psi$ is $L$-strongly convex and $3L$-smooth, $\zeta(x)=\psi(x)-L\norm x^2/2$ is convex and $2L$-smooth. Cocoercivity of $\nabla\zeta$ makes $I-\nabla\zeta/(2L)$ nonexpansive, so
\[
 \left\|u-v-\frac{\nabla\psi(u)-\nabla\psi(v)}{3L}\right\|
 =\frac23\left\|u-v-\frac{\nabla\zeta(u)-\nabla\zeta(v)}{2L}\right\|
 \le\frac23\norm{u-v}.
\]
This argument uses only differentiability and Lipschitz gradients. Taking $v=p_a(y)$ shows that an inexact primal step contracts distance by $2/3$, apart from an additive error $e/(3L)$. After six steps the distance is at most $r$ times its initial value plus $e/L$, where $r=(2/3)^6<1/8$.

Index the points after each group of six primal steps by $j=0,\ldots,M$. Write $a_j=\norm{x_j-p_a(y_j)}$, $G_j=G_a(y_j)$, $P_j=G_j+La_j^2$, and $d_j=y_{j+1}-y_j$. The gradient used for the dual update differs from $\nabla D_a(y_j)$ by a vector of norm at most $La_j+e$. Smoothness and strong concavity of $D_a$ imply
\begin{equation}\label{eq:upperonestep}
 \begin{aligned}
 G_{j+1}&\le\left(1-\frac1{4\kappa}\right)G_j
             -L\norm{d_j}^2+\frac L4a_j^2+\frac{e^2}{4L},\\
 a_{j+1}^2&\le3r^2a_j^2+3r^2\norm{d_j}^2+\frac{3e^2}{L^2}.
 \end{aligned}
\end{equation}
For clarity, the first inequality in \eqref{eq:upperonestep} follows from the smooth ascent identity with step $\eta=1/(4L)$: the decrease in the dual gap is at least $\eta\norm{\nabla D_a(y_j)}^2/2+\eta\norm{d_j/\eta}^2/4-\eta(La_j+e)^2/2$. Use $\norm{\nabla D_a(y_j)}^2\ge2\mu G_j$.
Combining the two inequalities in \eqref{eq:upperonestep} gives
\begin{equation}\label{eq:upperlyapunov}
 P_{j+1}\le\left(1-\frac1{4\kappa}\right)P_j
       -\frac L4a_j^2-\frac L2\norm{d_j}^2+\frac{4e^2}{L}.
\end{equation}
The first six primal steps give $P_0\le W+2e^2/L$. Iterating \eqref{eq:upperlyapunov} proves the first inequality in \eqref{eq:upperinner}, since the geometric error sum is at most $16\kappa e^2/L$.

Summing \eqref{eq:upperlyapunov} also gives
\[
 \sum_{j<M}a_j^2\le\frac{4P_0}L+\frac{16Me^2}{L^2},\qquad
 \sum_{j<M}\norm{d_j}^2\le\frac{2P_0}L+\frac{8Me^2}{L^2}.
\]
A group of six primal steps starting at distance $a$ from its minimizer has total squared movement at most $4a^2+22e^2/L^2$: sum the bounds $2(2/3)^{2j}a^2+32e^2/(9L^2)$ for its six steps. After a dual step, the next group's starting distance is at most $a_j+\norm{d_j}$. Including the dual steps, the total is therefore at most
\[
 4\norm{x_{\mathrm{in}}-p_a(y_{\mathrm{in}})}^2
 +8\sum_{j<M}a_j^2+9\sum_{j<M}\norm{d_j}^2
 +\frac{22(M+1)e^2}{L^2}.
\]
Substituting the preceding bounds proves the second inequality with $C_0=400$.
\end{proof}

\subsection{Changing the proximal center}
The following estimate prevents a condition-number loss when one subproblem is warm-started from the preceding output.
\begin{lemma}\label{lem:upperwarm}
For any centers $a,a'$ and any $(x,y)$,
\begin{equation}\label{eq:upperwarm}
 W_{a'}(x,y)\le2W_a(x,y)+12L\norm{a'-a}^2.
\end{equation}
\end{lemma}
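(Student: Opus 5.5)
The plan is to compare the two proximal objectives directly. Put $\delta=\norm{a'-a}$. Expanding the squares gives
\[
 q_{a'}(x,y)=q_a(x,y)+\langle c,x\rangle+k,\qquad c=2L(a-a'),\quad k=L(\norm{a'}^2-\norm a^2),
\]
so $\norm c=2L\delta$. The perturbation is linear in $x$ and independent of $y$. I will bound the two parts of $W_{a'}(x,y)=G_{a'}(y)+L\norm{x-p_{a'}(y)}^2$ separately, aiming for
\[
 L\norm{x-p_{a'}(y)}^2\le 2L\norm{x-p_a(y)}^2+8L\delta^2,\qquad G_{a'}(y)\le 2G_a(y)+4L\delta^2.
\]
Adding these gives exactly \eqref{eq:upperwarm}.

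\emph{Distance term.} The slice $q_a(\cdot,y)$ is $L$-strongly convex, and adding the linear term $\langle c,\cdot\rangle$ shifts its minimizer by at most $\norm c/L=2\delta$. Hence $\norm{p_{a'}(y)-p_a(y)}\le2\delta$. The inequality $\norm{u+v}^2\le2\norm u^2+2\norm v^2$ then gives $L\norm{x-p_{a'}(y)}^2\le2L\norm{x-p_a(y)}^2+8L\delta^2$.

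\emph{Dual gap term.} This is the only delicate step. Bounding $G_{a'}$ through $\norm{\nabla D_{a'}}^2/(2\mu)$ would lose a factor $\kappa$, so I avoid it. First I bound $D_{a'}(y)$ from below. Strong convexity gives $q_a(x,y)\ge D_a(y)+\tfrac L2\norm{x-p_a(y)}^2$. Minimizing $\tfrac L2\norm{x-p_a(y)}^2+\langle c,x\rangle$ over $x$ then yields
\[
 D_{a'}(y)\ge D_a(y)+\langle c,p_a(y)\rangle-\frac{\norm c^2}{2L}+k=D_a(y)+\langle c,p_a(y)\rangle-2L\delta^2+k.
\]
Next I bound $v_{a'}$ from above, freezing the primal variable at the old saddle point $x_a^*$. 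Since $D_{a'}(y')\le q_{a'}(x_a^*,y')$ and $\max_{y'}q_a(x_a^*,y')=v_a$ by the saddle property,
\[
 v_{a'}\le\max_{y'}q_{a'}(x_a^*,y')=v_a+\langle c,x_a^*\rangle+k.
\]
Subtracting the two bounds cancels $k$ and gives $G_{a'}(y)\le G_a(y)+\langle c,x_a^*-p_a(y)\rangle+2L\delta^2$.

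To control the cross term, I use the first inequality in \eqref{eq:upperdistance}, $\norm{p_a(y)-x_a^*}\le\sqrt{2G_a(y)/L}$. Cauchy--Schwarz and Young's inequality then give
\[
 \langle c,x_a^*-p_a(y)\rangle\le2L\delta\sqrt{2G_a(y)/L}=2\delta\sqrt{2LG_a(y)}\le G_a(y)+2L\delta^2.
\]
Therefore $G_{a'}(y)\le2G_a(y)+4L\delta^2$.

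Combining the two parts, $W_{a'}(x,y)\le2G_a(y)+2L\norm{x-p_a(y)}^2+12L\delta^2=2W_a(x,y)+12L\norm{a'-a}^2$. The main obstacle is the gap term, and the device that resolves it is evaluating $q_{a'}$ at the old saddle point $x_a^*$. This reduces the change in $v$ to a single linear term, which \eqref{eq:upperdistance} controls with no dependence on $\mu$.
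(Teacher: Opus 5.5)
Your proof is correct and follows the paper's argument essentially step for step. You use the same split of $W_{a'}$ into the gap and distance parts, the same bound on $v_{a'}$ obtained by testing at the old saddle point $x_a^*$, the same strong-convexity lower bound on $D_{a'}(y)$, the same use of \eqref{eq:upperdistance} for the cross term (the paper packages your Young step as $(\sqrt{G_a(y)}+\sqrt{2L}\norm{a'-a})^2\le2G_a(y)+4L\norm{a'-a}^2$), and the same minimizer-shift bound $\norm{p_{a'}(y)-p_a(y)}\le2\norm{a'-a}$.
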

\begin{proof}
Put $d=a'-a$ and $x^*=x_a^*$. The difference $q_{a'}-q_a$ is $-2L\langle d,x-x^*\rangle+c$, with $c$ independent of $(x,y)$. Testing the new primal objective at $x^*$ gives $v_{a'}\le v_a+c$. Strong convexity of $q_a(\cdot,y)$ gives
\[
 D_{a'}(y)\ge D_a(y)+c-2L\langle d,p_a(y)-x^*\rangle-2L\norm d^2.
\]
By \eqref{eq:upperdistance},
\[
 G_{a'}(y)\le\bigl(\sqrt{G_a(y)}+\sqrt{2L}\norm d\bigr)^2
 \le2G_a(y)+4L\norm d^2.
\]
The primal minimizers satisfy $\norm{p_{a'}(y)-p_a(y)}\le2\norm d$, because the change in their primal gradients is $-2Ld$. Thus $L\norm{x-p_{a'}(y)}^2\le2L\norm{x-p_a(y)}^2+8L\norm d^2$. Adding proves the claim.
\end{proof}

\subsection{Recursive estimation across an epoch}
Divide the outer proximal iterations into epochs of at most $q$ subproblems. At the first query of each epoch, compute the full joint gradient. At every subsequent new point $z_k$, draw $b$ independent uniform indices, with replacement, and set
\begin{equation}\label{eq:upperrecursive}
 v_k=v_{k-1}+\frac1b\sum_{i\in I_k}
       \bigl(\nabla f_i(z_k)-\nabla f_i(z_{k-1})\bigr).
\end{equation}
The known proximal gradient $2L(x-a)$ is added separately, so changes of center do not require a reset. The next subproblem starts at the preceding output, without a jump in the query point.

Let $e_{\mathcal E}$ be the maximum norm of $v_k-\nabla f(z_k)$ in epoch $\mathcal E$, and let $V_{\mathcal E}$ be its total squared query movement. Conditional mean zero of the fresh increments makes the errors a vector-valued martingale. Averaged smoothness and the $L^2$ maximal inequality give
\begin{equation}\label{eq:upperdoob}
 \E e_{\mathcal E}^2\le\frac{4L^2}{b}\E V_{\mathcal E}.
\end{equation}
All query points are selected before their fresh samples are drawn. The horizon is fixed, so there is no stopping-time qualification in this inequality.

Write $W_t$ for the input potential of subproblem $t$ and $P_t$ for its output potential. Summing the movement bound of Lemma~\ref{lem:upperinner} within an epoch and absorbing the error term in \eqref{eq:upperdoob}, provided $b\ge8C_0qM$, gives
\begin{equation}\label{eq:upperepocherror}
 \E e_{\mathcal E}^2\le\frac{8C_0L}{b}
      \sum_{t\in\mathcal E}\E W_t.
\end{equation}
Consequently,
\begin{equation}\label{eq:uppersumP}
 \sum_t\E P_t
 \le\left(\sigma+\frac{136C_0\kappa q}{b}\right)\sum_t\E W_t
 \le\frac1{1024}\sum_t\E W_t,
\end{equation}
where one valid choice is
\begin{equation}\label{eq:upperbatch}
 b=\lceil\beta\kappa q\rceil,\qquad \beta=300000C_0.
\end{equation}
Indeed, $M\le(4\log2048+1)\kappa$ ensures $b\ge8C_0qM$. The choices $\sigma=1/2048$ and $\beta=300000C_0$ also give
\[
 \sigma+\frac{136C_0}{\beta}
 =\frac1{2048}+\frac{136}{300000}<\frac1{1024}.
\]
These constants are sufficient, not optimized. If an epoch uses $b>n$, every sampled difference may instead be replaced by the exact full-gradient difference, at cost $2n\le2b$ per update. A full-gradient difference introduces no new sampling error; because the epoch starts with an exact gradient and uses exact differences throughout, all its estimates remain exact. The same option applies to initialization and refinement with batch size $b_0$. It preserves the query bound and zero-respecting property.

There is also a useful subroutine with a fixed center: take $q=1$, reset the estimator, and perform one inner call. Equations~\eqref{eq:upperdoob}--\eqref{eq:uppersumP} show that its conditional expected output potential is at most half its input potential. Each such call costs $O(n+\kappa^2)$ IFO queries. Repeating $R$ calls reduces the expected potential by $2^{-R}$. This will be used only for initialization and the final output.

\subsection{Outer descent, initialization, and stationarity}
Define the envelope and proximal map
\begin{equation}\label{eq:upperenvelope}
 H(a)=\min_x\{\Phi(x)+L\norm{x-a}^2\},\qquad p(a)=x_a^*.
\end{equation}
Since $\Phi$ is $L$-weakly convex, $H$ is differentiable, $\nabla H(a)=2L(a-p(a))$, and $\nabla H$ is $2L$-Lipschitz. To see the last constant directly, weak monotonicity of $\nabla\Phi$ implies $\langle a-a',p(a)-p(a')\rangle\ge\norm{p(a)-p(a')}^2/2$, hence $I-p$ is nonexpansive. Moreover, $H(0)-\inf H\le\Delta$.

Initialize by approximately solving the subproblem centered at zero. Exact dual initialization gives $\nabla_x f(0,0)=\nabla\Phi(0)$. Strong convexity of $q_0(\cdot,0)$ and smoothness of $\Phi$ imply
\[
 W_0(0,0)\le\frac{3\norm{\nabla\Phi(0)}^2}{2L}
 \le3(1+\kappa)\Delta.
\]
Use the fixed-center iteration for $R_0=\lceil\log_2(384(1+\kappa))\rceil$ calls. Its output $(x_1,y_1)$ has expected potential at most $\Delta/128$ for the center zero. Also $L\norm{p(0)}^2\le\Delta$. Writing $P_0=W_0(x_1,y_1)$, the definition of $H$ and \eqref{eq:upperdistance} give directly
\[
 H(x_1)\le\Phi(p(0))+L\norm{x_1-p(0)}^2
 \le\Phi(0)+4P_0.
\]
Since $\inf H=\inf\Phi$, the expected envelope gap is at most $\Delta+\Delta/32$. Together with Lemma~\ref{lem:upperwarm}, this yields
\begin{equation}\label{eq:upperinit}
 \E[H(x_1)-\inf H]\le2\Delta,\qquad
 \E W_{x_1}(x_1,y_1)\le25\Delta.
\end{equation}
For example, $\E L\norm{x_1}^2\le2\Delta+8(\Delta/128)$, and the latter potential is at most $2(\Delta/128)+12L\E\norm{x_1}^2<25\Delta$.

For $t=1,\ldots,T$, run one inner call for $q_{x_t}$, starting at $(x_t,y_t)$, to obtain $(x_{t+1},y_{t+1})$. Reuse the estimator within each epoch as above. Put $d_t=x_{t+1}-x_t$. Smoothness of $H$ and \eqref{eq:upperdistance} give the pathwise inequalities
\begin{equation}\label{eq:upperouter}
 \begin{aligned}
 H(x_{t+1})&\le H(x_t)-\frac L2\norm{d_t}^2+8P_t,\\
 \norm{\nabla H(x_t)}^2&\le8L^2\norm{d_t}^2+32LP_t,\\
 W_{t+1}&\le2P_t+12L\norm{d_t}^2.
 \end{aligned}
\end{equation}
For the first inequality, write $x_{t+1}=p(x_t)+u_t$. The smooth descent inequality gives $H(x_{t+1})\le H(x_t)-L\norm{p(x_t)-x_t}^2+L\norm{u_t}^2$; then use $\norm{u_t}^2\le4P_t/L$.

Let $S_P=\sum_{t=1}^T\E P_t$, $S_W=\sum_{t=1}^T\E W_t$, and $S_d=\sum_{t=1}^T\E\norm{d_t}^2$. Equation~\eqref{eq:uppersumP} and the third inequality in \eqref{eq:upperouter} imply
\[
 S_P\le\frac{25\Delta+12LS_d}{1022}.
\]
Summing the first inequality in \eqref{eq:upperouter} and using \eqref{eq:upperinit} now yields
\begin{equation}\label{eq:upperglobal}
 LS_d\le6\Delta,\qquad S_P\le\Delta/8,\qquad S_W\le100\Delta,
 \qquad \sum_{t=1}^T\E\norm{\nabla H(x_t)}^2\le52L\Delta.
\end{equation}
Choose $T=\max\{1,\lceil256L\Delta/\eps^2\rceil\}$ and select $J$ uniformly from $[T]$, independently of the iterates. Then $\E\norm{\nabla H(x_J)}^2\le\eps^2/4$.

The final output must satisfy the criterion for $\Phi$, rather than just its envelope. Starting at the stored pair $(x_J,y_J)$, apply the fixed-center iteration to $q_{x_J}$ for
\[
 R_1=\max\left\{1,\left\lceil\log_2\!\left(\frac{1600L(1+\kappa)^2\Delta}{\eps^2}\right)\right\rceil\right\}
\]
calls. Since $\E W_{x_J}(x_J,y_J)=S_W/T\le100\Delta$, the resulting potential $P_{\mathrm{fin}}$ satisfies $\E P_{\mathrm{fin}}\le\eps^2/[16L(1+\kappa)^2]$. Return its primal coordinate $\hat x$. Using $L(1+\kappa)$-smoothness of $\Phi$, \eqref{eq:upperdistance}, and $\nabla\Phi(p(x_J))=\nabla H(x_J)$ gives
\[
 \E\norm{\nabla\Phi(\hat x)}^2
 \le2\E\norm{\nabla H(x_J)}^2
       +8L(1+\kappa)^2\E P_{\mathrm{fin}}\le\eps^2.
\]
Thus the argument proves the expected squared-gradient guarantee directly.

\subsection{Oracle cost and algorithm class}
For $1\le\kappa\le\sqrt n$, choose
\[
 q=\max\{1,\lfloor\sqrt n/\kappa\rfloor\},\qquad b=\lceil\beta\kappa q\rceil.
\]
An epoch has one full-gradient computation and $O(\kappa)$ estimator evaluations per subproblem, each requiring at most $2b$ component queries. The main iterations therefore cost
\[
 O\!\left(n+T\left(\frac nq+\kappa^2q\right)\right)
 =O(n+\sqrt n\,\kappa L\Delta\eps^{-2}).
\]
Here $q\ge\sqrt n/(2\kappa)$ and $\sqrt n\,\kappa\le n$ absorb rounding and the case $T=1$. Initialization and the final refinement cost $O((n+\kappa^2)(R_0+R_1))=\tO(n)$ in this regime. This proves \eqref{eq:smallupper}.

For $\kappa>\sqrt n$, all contraction, error, and stationarity arguments above still apply. Now $q=1$, so the main phase costs $O(n+T(n+\kappa^2))$, and initialization and final refinement cost $\tO(n+\kappa^2)$. Together they give $\tO((n+\kappa^2)(1+L\Delta\eps^{-2}))$, as stated in Theorem~\ref{thm:smallupper}.

Every step is a linear combination of previously queried points and component-gradient responses. The regularization center and any stored output pair are previously generated points. Hence the method is zero-respecting, including initialization and final refinement. Its number of queries is bounded deterministically by the displayed counts.\hfill$\square$

\section{The Imported Catalyst Guarantee}\label{app:catalyst}
We use \citet[Theorem 4.1, Equation (25)]{zhang} with the stopping tolerances of their Theorems~4.1--4.2. For the uniformly selected outer iterate, their guarantee is
\begin{equation}\label{eq:catalystcriterion}
 \E\norm{\nabla\Phi(\hat x_T)}^2
 \le\frac{268L}{5T}\Delta+\frac{28L}{5T}D_y^0,
 \qquad D_y^0=\norm{y_0-y^*(x_0)}^2.
\end{equation}
Thus the imported criterion is the expected squared gradient in \eqref{eq:criterion}. Corollary~4.1 and Section~4.2, Equation~(28), of that paper give the expected query bound
\[
 \tO\!\left((n+n^{3/4}\sqrt\kappa)L(\Delta+D_y^0)\eps^{-2}\right)
\]
for Catalyst with SVRG or SAGA under averaged smoothness. Our initialization makes $D_y^0=0$.

Choose SVRG as the inner method. Its iterates and variance-reduced estimates, and Catalyst's extrapolated points, are linear combinations of earlier points and component-gradient responses. The quadratic regularizers use previously formed centers. Starting from the origin, induction therefore gives Definition~\ref{def:zr}. Full-gradient and residual evaluations are implemented by $n$ component calls and are included in the cited query accounting. Consequently this is an admissible expected-query upper bound for Theorem~\ref{thm:matching}; no conversion from an expected-norm guarantee or component proximal oracle is needed.

\end{document}